\documentclass[10pt]{article}
\usepackage{amsmath}
\usepackage{amssymb}
\usepackage{amsthm}
\usepackage{graphicx}
\usepackage{verbatim}
\usepackage{enumerate}
\usepackage{color}
\usepackage{mathrsfs}
\usepackage{longtable}
\usepackage{bera}

\numberwithin{equation}{section}

\setlength{\evensidemargin}{0in}
\setlength{\oddsidemargin}{0in}
\setlength{\textwidth}{6.5in}
\setlength{\textheight}{9in}
\setlength{\topmargin}{-0.5in}

\setlength{\parskip}{10pt plus 2pt minus 1pt}
\setlength{\floatsep}{0pt}
\setlength{\parindent}{0pt}

\theoremstyle{plain}
\newtheorem{thm}{Theorem}[section]

\newtheorem{lemma}[thm]{Lemma}
\newtheorem{proposition}[thm]{Proposition}
\newtheorem{corollary}[thm]{Corollary}

\theoremstyle{remark}

\newtheorem{remark}{Remark}

\allowdisplaybreaks

\def\CC{{\cal C}}

\def\Var{{\rm Var} \,}

\def\E{{\mathbf E}}
\def\P{{\mathbf P}}
\def\Cox{\hfill \Box}

\def\one{{\bf 1}}
\def\|{{\, | \, }}
\def\F{{\mathcal F}}
\def\T{{\mathcal T}}

\def\CC{{\mathcal C}}
\def\Zt{\widetilde{Z}}

  %%% Vertex set of the Ulam tree

\def\conn{\leftrightarrow}

\def\GW{{\tt GW}}
\def\Bin{\text{Bin}}
\def\one{\mathbf{1}}

\def\rtt{{\mathbf{0}}}
\def\TT{{\bf T}}

\reversemarginpar

\begin{document}

	\title{Critical Percolation and the Incipient Infinite Cluster on Galton-Watson Trees}
	\author{Marcus Michelen\thanks{University of Pennsylvania, Department of Mathematics, David Rittenhouse Lab. 209 South 33rd Street Philadelphia, PA
			19104-6395
}}
	\date{\vspace{-5ex}}
\maketitle

\begin{abstract}We consider critical percolation on Galton-Watson trees and prove
	quenched analogues of classical theorems of critical branching
	processes. We
	show that the probability critical percolation reaches depth $n$ is
	asymptotic to a tree-dependent constant times $n^{-1}$. Similarly,
	conditioned on critical percolation reaching depth $n$, the number of
	vertices at depth $n$ in the critical percolation cluster almost surely
	converges in distribution to an exponential random variable with mean
	depending only on the offspring distribution. The incipient infinite cluster
	(IIC) is constructed for a.e. Galton-Watson tree and we prove a limit
	law for
	the number of vertices in the IIC at depth $n$, again depending only on the
	offspring distribution. Provided the offspring distribution used to generate
	these Galton-Watson trees has all finite moments, each of these results holds
	almost-surely. 
\end{abstract}

\section{Introduction} \label{sec:intro}

We consider percolation on a locally finite rooted tree $T$: each edge
is open with probability $p \in(0,1)$, independently of all others.
Let $\rtt$ denote the root of $T$ and $\CC_p$ be the open
$p$-percolation cluster of the root. We may consider the \emph
{survival probability} $\theta_T(p) := \P[|\CC_p| = +\infty]$ and
note that $\theta_T$ is an increasing function of $p$. There thus
exists a \emph{critical percolation parameter} $p_c \in[0,1]$ so that
$\theta_T(p) = 0$ for all $p \in[0,p_c)$ and $\theta_T(p) > 0$ for
$p \in(p_c,1]$. If $T$ is a regular tree where each non-root vertex
has degree $d + 1$---i.e. each vertex has $d$ children---then the
classical theory of branching processes shows that $p_c = \frac{1}{d}$
and $\theta_T(p_c) = 0$ (see, for instance, \cite{athreya-ney}).
Since critical percolation does not occur, we may consider the \emph
{incipient infinite cluster} (IIC), in which we condition on critical
percolation reaching depth $M$ of $T$ and take $M$ to infinity.

The IIC for regular trees was first constructed and considered by
Kesten in \cite{kesten-subdiffusive}. In that work, along with \cite
{barlow-kumagai}, the primary focus was on simple random walk on the
IIC for regular trees. Our focus is on three elementary quantities for
random $T$: the probability that critical percolation reaches depth
$n$; the number of vertices of $\mathcal{C}_p$ at depth $n$
conditioned on percolation reaching depth $n$; and the number of
vertices in the IIC at depth $n$. For regular trees, these questions
were answered in the study of critical branching processes. In fact,
these classical results apply to \emph{annealed} critical percolation
on Galton-Watson trees. If we generate a Galton-Watson tree $T$ with
progeny distribution $Z \geq1$ with $\E[Z] > 1$, we may perform $p_c
= 1/\E[Z]$ percolation at the same time as we generate $T$; this is
known at the \emph{annealed} process---in which we generate $T$ and
percolate simultaneously---and is equivalent to generating a
Galton-Watson tree with offspring distribution $\Zt:= \Bin(Z,p_c)$.
Since $\E[\Zt] = 1$, this is a critical branching process and thus
the classical theory can be used:
\begin{thm}[\cite{kesten-ney-spitzer}] \label{thm:annealed}
	Suppose $\E[Z^2] < \infty$, and set $Y_n$ to be the set of vertices
	at depth $n$ of $T$ connected to the root in $p_c = 1/\E[Z]$
	percolation. Then
	\begin{enumerate}
		\item[$(a)$] The annealed probability of surviving to depth $n$
		satisfies
		\[
		n \cdot\P[|Y_n| > 0] \to\frac{2}{\Var[\widetilde{Z}]} = \frac{2
			\E[Z]^2}{\E[Z(Z-1)]}\,.
		\]
		\item[$(b)$] The annealed conditional distribution of $|Y_n|/n$ given
		$|Y_n| > 0$ converges in distribution to an exponential law with mean
		$\frac{\E[Z(Z-1)]}{2\E[Z]^2}$ as $n\to\infty$.
	\end{enumerate}
\end{thm}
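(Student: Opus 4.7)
The plan is to reduce the statement to the classical Kolmogorov and Yaglom theorems for critical Galton--Watson processes, using the annealed equivalence noted in the introduction. The key observation is that performing independent $p_c$-percolation while generating $T$ yields, at the open cluster of the root, a Galton--Watson tree with offspring distribution $\widetilde{Z} = \Bin(Z, p_c)$: conditional on $Z$, the number of surviving children of each vertex is $\Bin(Z, p_c)$ and these counts are independent across vertices by the independence of the percolation and the branching property. Consequently $|Y_n|$ has the same distribution as the $n$-th generation size of the Galton--Watson process with offspring $\widetilde{Z}$.

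The next step is a variance computation. Since $\E[\widetilde{Z}] = p_c \E[Z] = 1$, this process is critical. By the law of total variance,
\[
\Var[\widetilde{Z}] \;=\; \E[\Var(\widetilde{Z}\mid Z)] + \Var[\E(\widetilde{Z}\mid Z)] \;=\; p_c(1-p_c)\E[Z] + p_c^2 \Var[Z],
\]
and substituting $p_c = 1/\E[Z]$ collapses this to $\Var[\widetilde{Z}] = \E[Z(Z-1)]/\E[Z]^2$, which matches the constants appearing in both $(a)$ and $(b)$. The assumption $\E[Z^2]<\infty$ ensures $\Var[\widetilde{Z}] < \infty$, which is the hypothesis needed below.

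With this reduction in hand, $(a)$ follows from Kolmogorov's theorem and $(b)$ from Yaglom's theorem for critical Galton--Watson processes with finite-variance offspring (see Athreya--Ney). The main technical content underlying these two classical results is the analysis of the iterated probability generating function $f_n = f \circ \cdots \circ f$ of $\widetilde{Z}$. Writing $u_n = 1 - f_n(0) = \P[|Y_n| > 0]$ and Taylor-expanding $f$ near $1$ yields the recursion
\[
\frac{1}{u_{n+1}} - \frac{1}{u_n} \;\longrightarrow\; \frac{\Var[\widetilde{Z}]}{2},
\]
and Cesaro summation gives $n u_n \to 2/\Var[\widetilde{Z}]$, proving $(a)$. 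For $(b)$, one analyzes the conditional Laplace transform $\E[\exp(-\lambda |Y_n|/n) \mid |Y_n|>0]$ via the generating function identity $\E[s^{|Y_n|}] = f_n(s)$; a parallel generating-function asymptotic (this time evaluated at $s = e^{-\lambda/n}$) shows that the conditional Laplace transform converges to $(1 + \lambda \Var[\widetilde{Z}]/2)^{-1}$, the Laplace transform of the claimed exponential law.

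The single genuine obstacle is the generating-function asymptotic behind Kolmogorov's and Yaglom's theorems; everything else is a bookkeeping computation. Since both theorems are classical and directly applicable after the annealed reduction, the theorem follows by citing them with the variance expression above.
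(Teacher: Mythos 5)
Your proposal is correct and matches the paper's treatment: the paper does not prove this theorem but cites it (Kesten--Ney--Spitzer, with the classical Kolmogorov/Yaglom versions under a third moment), after noting exactly the same annealed reduction to a critical Galton--Watson process with offspring $\Bin(Z,p_c)$. Your variance computation $\Var[\widetilde{Z}]=\E[Z(Z-1)]/\E[Z]^2$ correctly identifies the constants, so the argument is the same citation-based route the paper takes.
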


Under the additional assumption of $\E[Z^3] < \infty$, parts $(a)$
and $(b)$ are due to Kolmogorov \cite{kolmorogov1938} and Yaglom \cite
{yaglom} respectively; as such, they are commonly referred to as
Kolmogorov's estimate and Yaglom's limit law. For a modern treatment of
these classical results, see \cite{LPP-95} or \cite[Section
$12.4$]{LP-book}. Although less widely known, Theorem \ref
{thm:annealed} quickly gives a limit law for the size of the annealed IIC.

\begin{corollary} \label{cor:IIC-annealed}
	If $\E[Z^2] < \infty$, let $C_n$ denote the number of vertices at
	depth $n$ in the annealed incipient infinite cluster. Then $C_n / n$
	converges in distribution to the random variable with density $\lambda
	^2 x e^{-\lambda x }$ with $\lambda:=\frac{2 \E[Z]^2}{\E[Z(Z-1)]}$
	on $[0,\infty)$. In other words,
	\[
	\lim_{n \to\infty}\left(\lim_{M\to\infty}\P[ |Y_n|/n \in(a,b)
	\| |Y_M| > 0 ] \right)= \int_{a}^b \lambda^2x e^{-\lambda x}\,dx
	\]
	for each $a < b.$
\end{corollary}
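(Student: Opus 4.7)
The plan is to reduce the corollary to the two parts of Theorem \ref{thm:annealed} by working directly with the annealed IIC definition as a limit of conditional laws. Writing $q_\ell := \P[|Y_\ell| > 0]$, I would first fix $n$ and $M > n$ and decompose by conditioning on $|Y_n|$. Since the annealed process is a Galton-Watson tree with offspring distribution $\widetilde{Z} = \Bin(Z,p_c)$, the branching property implies that conditional on $|Y_n| = k$, the depth-$M$ population decomposes as a sum of $k$ independent copies of $|Y_{M-n}|$. Therefore
\[
\P[|Y_M| > 0 \mid |Y_n| = k] = 1 - (1-q_{M-n})^k,
\]
and the event $\{|Y_n|/n \in (a,b)\}$ is measurable with respect to the first $n$ generations.

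Next I would compute the $M \to \infty$ limit of
\[
\P[|Y_n|/n \in (a,b) \mid |Y_M| > 0] = \frac{1}{q_M}\, \E\!\left[\one_{|Y_n|/n \in (a,b)}\left(1 - (1-q_{M-n})^{|Y_n|}\right)\right].
\]
By Theorem \ref{thm:annealed}(a), $\ell q_\ell \to \lambda$, which gives the pointwise limit $\big(1 - (1-q_{M-n})^k\big)/q_M \to k$ for each fixed $k$. The elementary bound $1-(1-x)^k \le kx$ combined with $q_{M-n}/q_M \to 1$ supplies a dominating constant multiple of $|Y_n|$, and since $\E[|Y_n|] = 1$ (criticality), dominated convergence yields
\[
\lim_{M \to \infty} \P[|Y_n|/n \in (a,b) \mid |Y_M| > 0] = \E\!\left[|Y_n|\, \one_{|Y_n|/n \in (a,b)}\right] = n\,q_n \cdot \E\!\left[\tfrac{|Y_n|}{n}\,\one_{|Y_n|/n \in (a,b)} \,\Big|\, |Y_n| > 0\right].
\]
This exhibits the IIC marginal at depth $n$ as the size-biasing of the Yaglom conditional distribution.

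Finally, I would take $n \to \infty$. The prefactor $n q_n$ tends to $\lambda$ by Kolmogorov's estimate. Theorem \ref{thm:annealed}(b) says $|Y_n|/n$ given $|Y_n|>0$ converges in distribution to $E \sim \mathrm{Exp}(1/\lambda)$, with continuous limit, so by the Portmanteau theorem (applied to the bounded continuous function $x \mapsto x$ restricted to the continuity set $(a,b)$) the conditional expectation converges to $\int_a^b x \cdot \lambda e^{-\lambda x}\,dx$. Combining,
\[
\lim_{n \to \infty}\lim_{M \to \infty} \P[|Y_n|/n \in (a,b) \mid |Y_M| > 0] = \lambda \int_a^b x\, \lambda e^{-\lambda x}\,dx = \int_a^b \lambda^2 x e^{-\lambda x}\,dx,
\]
as claimed.

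The main obstacle I anticipate is the dominated-convergence step justifying interchange of the $M$-limit with the expectation: one must rule out contributions from atypically large $|Y_n|$, and the key input is that criticality forces $\E[|Y_n|] = 1$ to be uniformly bounded in $n$, while $q_{M-n}/q_M \to 1$ uniformly for $n$ fixed. A minor point is that $\E[V_n \one_{V_n \in (a,b)}]$ involves an indicator rather than a continuous function; this is handled because the exponential limit is atomless, so every $a,b \in [0,\infty)$ is a continuity point of the limit law.
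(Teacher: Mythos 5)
Your proof is correct, and it is essentially the argument the paper has in mind: the paper omits the proof of Corollary \ref{cor:IIC-annealed}, pointing to the proof of Theorem \ref{thm:IIC}, and your computation is precisely the annealed analogue of that argument --- condition at depth $n$, use the branching property to pass to the $M\to\infty$ limit and obtain the size-biased Yaglom law, then apply Theorem \ref{thm:annealed}$(a)$ and $(b)$ as $n\to\infty$. Your handling of the two delicate points (dominated convergence via $1-(1-q_{M-n})^k\le k\,q_{M-n}$ with $\E[|Y_n|]=1$, and the indicator's discontinuity being null for the atomless exponential limit) is sound; if anything, the exact identity $\P[|Y_M|>0\mid |Y_n|=k]=1-(1-q_{M-n})^k$ makes the annealed case cleaner than the quenched inclusion--exclusion used in Theorem \ref{thm:IIC}.
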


This can be easily proven from Theorem \ref{thm:annealed} using an
argument similar to the proof of Theorem \ref{thm:IIC}, and thus the
details are omitted.

Our goal is to upgrade Theorem \ref{thm:annealed} and Corollary \ref
{cor:IIC-annealed} to hold for the \emph{quenched} process; that is,
rather than generate $T$ and perform percolation at the same time as in
the annealed case, we generate $T$ and then perform percolation on each
resulting $T$. Before stating the quenched results, we recall some
notation and facts from the theory of branching processes. If we allow
$\P[Z = 0] > 0$ and condition on the resulting tree being infinite, we
may pass to the reduced tree as in \cite[Chapter $5.7$]{LP-book} in
which we remove all vertices that have finitely many descendants; this
results in a new Galton-Watson process with some offspring distribution
$\tilde{Z} \geq1$. We therefore assume without loss of generality
that $Z \geq1$. For a Galton-Watson tree $T$, let $Z_n$ denote the
number of vertices at distance of $n$ from the root; then the process
$W_n = Z_n / (\E[Z])^n$ converges almost-surely to some random
variable $W$.

A first quenched result is that of \cite{lyons90}, which states that
for a.e.\ supercritical Galton-Watson tree with progeny distribution
$Z$, we have that the critical percolation probability is $p_c = 1/\E
[Z]$; furthermore, for almost every Galton-Watson tree $\TT$, $\theta
_\TT(p) = 0$ for $p \in[0,p_c]$ and $\theta_\TT(p)>0$ for $p \in
(p_c,1]$. For a fixed tree $T$, let $\P_T[\cdot]$ be the probability
measure induced by performing $p_c$ percolation on $T$. When $T$ is
random, this is a random variable and we may ask about the almost sure
behavior of certain probabilities. Our main results are summarized in
the following theorem:

\begin{thm}\label{thm:main}
	Let $\TT$ be a Galton-Watson tree with progeny distribution $Z \geq1$
	with $\E[Z] > 1$. Suppose $\E[Z^p] < \infty$ for each $p \geq1$.
	Set $\lambda:= \frac{2 \E[Z]^2}{\E[Z(Z-1)]}$ and let $Y_n$ be the
	set of vertices in depth $n$ of $\TT$ connected to the root in $p_c =
	1/\E[Z]$ percolation. Then for a.e. $\TT$ we have
	\begin{enumerate}
		\item[$(a)$] $n\cdot\P_\TT[|Y_n| > 0] \to W \lambda$ a.s.
		\item[$(b)$] The conditioned variable $(|Y_n| / n \| |Y_n| > 0)$
		converges in distribution to an exponential random variable with mean
		$\lambda^{-1}$ a.s.
		\item[$(c)$] Let $\mathbf{C}_n$ denote the number of vertices in the
		quenched IIC of $\TT$ at depth $n$. Then $\mathbf{C}_n / n$ converges
		in distribution to the random variable with density $\lambda^2 x
		e^{-\lambda x}$ a.s.
	\end{enumerate}
\end{thm}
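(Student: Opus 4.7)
My plan is to exploit the recursive structure of percolation on trees to transfer the classical Kolmogorov--Yaglom analysis into an almost-sure, tree-dependent statement.

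For part (a), set $q_n(T) := \P_T[|Y_n|>0]$. Independence of edges in disjoint subtrees gives the recursion
\[
1 - q_n(\TT) \;=\; \prod_{v}\bigl(1 - p_c\, q_{n-1}(\TT^{(v)})\bigr),
\]
where the product is over the children of the root and $\TT^{(v)}$ denotes the subtree rooted at $v$. Linearizing, $q_n(\TT) \approx p_c \sum_v q_{n-1}(\TT^{(v)})$, which is precisely the recursion obeyed by the martingale $W_n = Z_n / \E[Z]^n$; the candidate scaling $n\, q_n(\TT) \to \lambda W$ is therefore internally consistent. To pin down the constant $\lambda = 2/\Var[\widetilde Z]$, I would pass to reciprocals and use the second-order Taylor expansion of the product above. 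Writing $\sigma^2 = \Var[\widetilde Z] = 2/\lambda$, the tree analog of the classical identity $\tfrac{1}{1-f(s)} - \tfrac{1}{1-s} \to \tfrac{\sigma^2}{2}$ to prove is
\[
\frac{W_n(\TT)}{q_n(\TT)} \;=\; \frac{\sigma^2 n}{2} \;+\; o(n) \quad \text{almost surely,}
\]
which, together with $W_n \to W$ a.s., gives $n\, q_n(\TT) \to 2W/\sigma^2 = \lambda W$.

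For part (b), I would apply the same reciprocal/telescoping strategy to $u_n(\TT, s) := 1 - \E_\TT[s^{|Y_n|}]$, which satisfies the tree recursion $1 - u_n(\TT, s) = \prod_v (1 - p_c\, u_{n-1}(\TT^{(v)}, s))$. Running the argument at $s = e^{-\theta/n}$ for fixed $\theta > 0$ and using $1 - s = \theta/n + O(1/n^2)$, one aims to prove
\[
\frac{W_n(\TT)}{u_n(\TT, e^{-\theta/n})} \;=\; \frac{n}{\theta} + \frac{\sigma^2 n}{2} + o(n) \quad \text{a.s.}
\]
Combined with (a), this yields
\[
\E_\TT\bigl[e^{-\theta|Y_n|/n} \bigm| |Y_n|>0\bigr] \;=\; 1 - \frac{u_n(\TT, e^{-\theta/n})}{q_n(\TT)} \;\longrightarrow\; \frac{\lambda}{\lambda+\theta} \quad \text{a.s.,}
\]
which is the Laplace transform of an $\mathrm{Exp}(\lambda^{-1})$ law, so the continuity theorem upgrades this to almost-sure convergence in distribution.

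For part (c), I would construct the quenched IIC as the a.s.\ weak limit of $\P_\TT[\,\cdot\,\mid |Y_M|>0]$ as $M \to \infty$, following Kesten's spine construction adapted to the quenched setting. The Markov property at level $n$ gives
\[
\P_\TT[|Y_n|=k \mid |Y_M|>0] \;=\; \frac{\E_\TT\bigl[\one\{|Y_n|=k\}\bigl(1 - \prod_{v \in Y_n}(1 - q_{M-n}(\TT^{(v)}))\bigr)\bigr]}{\P_\TT[|Y_M|>0]}.
\]
Substituting $q_{M-n}(\TT^{(v)}) \sim \lambda W(\TT^{(v)})/(M-n)$ and $\P_\TT[|Y_M|>0] \sim \lambda W/M$ from (a), and letting $M \to \infty$, the IIC at level $n$ becomes the law of $|Y_n|$ under $\P_\TT$ size-biased by $W(\TT)^{-1}\sum_{v \in Y_n} W(\TT^{(v)})$. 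Because $\one\{v \in Y_n\}$ depends only on the root-to-$v$ edges while $W(\TT^{(v)})$ depends only on the subtree rooted at $v$, these are independent conditional on $\TT$; a weighted strong law along the cluster gives $\sum_{v \in Y_n} W(\TT^{(v)}) \sim |Y_n|$. Hence $\mathbf{C}_n/n$ is asymptotically $x$-size-biased $\mathrm{Exp}(\lambda^{-1})$, which has density $\lambda^2 x e^{-\lambda x}$.

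The main obstacle is the almost-sure upgrade in part (a): the annealed identity only controls $\E[1/q_n]$ on average, but the tree recursion accumulates error terms (notably the second-order Taylor remainders involving $\sum_v q_{n-1}(\TT^{(v)})^2$) that must be summable generation-by-generation on a full-measure set of trees. Here the assumption $\E[Z^p] < \infty$ for all $p$ is essential: it provides moment control on branching sums at each level strong enough for Borel--Cantelli to convert $L^p$ bounds into almost-sure estimates. Parts (b) and (c) then piggyback on these estimates, with (b) extending the $s=0$ analysis to $s = e^{-\theta/n}$ and (c) combining (a), (b), and the spine decomposition.
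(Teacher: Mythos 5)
Your outline reduces part $(a)$ to the claim $W_n(\TT)/q_n(\TT) = \tfrac{\sigma^2 n}{2} + o(n)$ almost surely, but this claim is just a restatement of the theorem (given $W_n \to W$), and the route you propose for it --- telescoping reciprocals through the quenched recursion $1-q_n(\TT) = \prod_v\bigl(1 - p_c\, q_{n-1}(\TT^{(v)})\bigr)$ --- is left entirely unexecuted at its only hard point. To control the generation-by-generation increments of $W_n/q_n$ you must bound error terms such as $\sum_v q_{n-1}(\TT^{(v)})^2$ relative to $\bigl(\sum_v q_{n-1}(\TT^{(v)})\bigr)^2$, uniformly along the tree and simultaneously for all vertices; but such bounds essentially require already knowing $q_{n-1}(\TT^{(v)}) \approx \lambda W(v)/n$, i.e.\ the statement being proved, and the increments of $W_n/q_n$ are of constant order, so ``moment control plus Borel--Cantelli'' does not apply to them in any direct way (the relevant quantities are neither small nor independent across generations). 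You explicitly flag this as ``the main obstacle'' and then do not resolve it; the same unproved estimate is the entire content of your part $(b)$ claim $W_n/u_n(\TT,e^{-\theta/n}) = n/\theta + \tfrac{\sigma^2 n}{2} + o(n)$. The paper circumvents this circularity altogether: part $(a)$ is proved by cutting at depth $m=\lceil n^{1/4}\rceil$, using the Bonferroni inequalities with a second-moment/Borel--Cantelli bound (Lemma \ref{lem:prop-sandwich} and Theorem \ref{thm:martingales}) to reduce $n\P_\TT[\rtt\conn\TT_n]$ to the i.i.d.\ sum $\sum_{v\in\TT_m} n\P_\TT[v\conn\TT_n]/\mu^m$, and then importing the \emph{annealed} Kolmogorov estimate (Theorem \ref{thm:annealed}) for the common mean; part $(b)$ is the method of moments via the martingales $M_n^{(k)}$ of Theorem \ref{thm:martingales}, not a quenched generating-function analysis.

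In part $(c)$ there is a second gap: the step ``a weighted strong law along the cluster gives $\sum_{v\in Y_n} W(\TT^{(v)}) \sim |Y_n|$'' needs an anti-concentration input you never supply. Given the first $n$ levels, the summands $\P_\TT[v\in Y_n \mid \cdot\,]\,(W(v)-1)$ are indeed independent and centered, but an almost-sure law requires the conditional variance to be summable, which forces $\max_{v\in\TT_n}\P_\TT[v\in Y_n \mid \cdot\,]$ to decay polynomially; this is exactly the paper's Proposition \ref{pr:spread} (one surviving vertex at depth $\approx \tfrac{\log n}{4\log \mu}$ with high conditional probability, and $\max_v \P_\TT[v\in Y_n\mid \rtt\conn\TT_n]=O(n^{-1/8})$), which takes real work to prove. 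Without it, nothing in your argument rules out the conditioned cluster concentrating on descendants of a few depth-$m$ vertices with atypical $W$, in which case the asserted averaging would fail. So while your architecture is coherent and, for $(a)$--$(b)$, genuinely different from the paper's, the decisive estimates are asserted rather than proved, and the proposal as written does not constitute a proof.
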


Note that, surprisingly, the limit laws of parts $(b)$ and $(c)$ of
Theorem \ref{thm:main} do not depend at all on $\TT$ itself but just
on the distribution of $Z$. This is in sharp contrast to the case of
near-critical and supercritical percolation on Galton-Watson trees, in
which the behavior is dependent on the tree itself \cite
{mpr-quenched}. One possible justification for this lack of dependence
on $W$, for instance, is that conditioning on $|Y_n| > 0$ forces
certain structure of the percolation cluster near the root; since $W$
is mostly determined by the levels of $\TT$ near the root, the
behavior when conditioned on $|Y_n| > 0$ for large $n$ does not depend
on $W$. Part $(a)$ of Proposition \ref{pr:spread} corroborates this
heuristic explanation.

The three parts of Theorem \ref{thm:main} are Theorems \ref
{thm:surv-prob}, \ref{thm:cond-surv} and \ref{thm:IIC} respectively.
The proof of part $(a)$ utilizes its annealed analogue, Theorem \ref
{thm:annealed}$(a)$, along with a law of large numbers argument. Part
$(b)$ is proven by the method of moments building on the work of \cite
{mpr-quenched}. Part $(c)$ follows from there with a similar law of
large numbers argument combined with two short facts about the
structure of the percolation cluster conditioned on $|Y_n| > 0$ (this
is Proposition \ref{pr:spread}).

\begin{remark} Theorem \ref{thm:main} assumes that $\E[Z^p] < \infty
	$ for each $p \geq1$, and we suspect that this condition is an
	artifact of the proof. Since we use the method of moments, it is
	natural that we require all moments of the underlying distribution to
	be finite. We suspect that less rigid conditions are sufficient, but
	this would require a different proof strategy than the method of
	moments, perhaps utilizing a stronger anti-concentration statement in
	the vein of Proposition \ref{pr:spread}.
\end{remark}

\section{Set-up and notation}

We begin with some notation and a brief description of the probability
space on which we will work. Let $Z$ be a random variable taking values
in $\{1,2,\ldots,\}$ with $\mu:= \E[Z] > 1$ and $\P[Z = 0] = 0$.
Define its \emph{probability generating function} to be $\phi(z):=
\sum\P[Z = k] z^k$. Let $\TT$ be a random locally finite rooted tree
with law equal to that of a Galton-Watson tree with progeny
distribution $Z$ and let $(\Omega_1,\T,\GW)$ be the probability
space on which it is defined. Since we will perform percolation on
these trees, we also use variables $\{U_i\}_{i = 1}^\infty$ where the
$U_i$ are i.i.d. random variables uniform on $[0,1]$; let $(\Omega
_2,\F_2, \P_2)$ be the corresponding probability space. Our canonical
probability space will be $(\Omega,\F,\P)$ with $\Omega:= \Omega_1
\times\Omega_2$, $\F:= \T\otimes\F_2$ and $\P:= \GW\times\P
_2$. We interpret an element $\omega=(T,\omega_2) \in\Omega$ as the
tree $T$ with edge weights given by the $U_i$ random variables. To
obtain $p$ percolation, we restrict to the subtree of edges with weight
at most $p$. Since we are concerned with quenched probabilities, we
define the measure $\P_\TT[\cdot] := \P[\cdot\| \TT] = \P[\cdot
\| \T]$. Since this is a random variable, our goal is to prove
theorems $\GW$-a.s.

We employ the usual notation for a rooted tree $T$, Galton-Watson or
otherwise: $\rtt$ denotes the root; $T_n$ is the set of vertices at
depth $n$; and $Z_n := |T_n|$. In the case of a Galton-Watson tree $\TT
$, we define $W_n := Z_n / \mu^n$ and recall that $W_n \to W$ almost
surely. Furthermore, if $\E[Z^p] < \infty$ for some $p \in[1,\infty
)$, we in fact have $W_n \to W$ in $L^p$ \cite[Theorems $0$ and
$5$]{bingham-doney74}. In the Galton-Watson case, define $\T_n :=
\sigma(\TT_n)$; then $(\T_n)_{n = 0}^\infty$ is a filtration that
increases to $\T$. For a vertex $v$ of $T$, define $T(v)$ to be the
descendant tree of $v$ and extend our notation to include $T_n(v),
Z_n(v), W_n(v)$ and $W(v)$. For vertices $v$ and $w$, write $v \leq w$
if $v$ is an ancestor of $w$.

For percolation, recall that the critical percolation probability for
$\GW$-a.e. $\TT$ is $p_c:= 1/\mu$ and that percolation does not
occur at criticality \cite{lyons90}. For vertices $v$ and $w$ with $v
\leq w$, let $\{v \conn w\}$ denote the event that there is an open
path from $v$ to $w$ in $p_c$ percolation; let $\{v \conn(u,w)\}$ be
the event that $v$ is connected to both $u$ and $w$ in $p_c$
percolation; for a subset $S$ of $\TT$, let $\{v \conn S \}$ denote
the event that $v$ is connected to some element of $S$ in $p_c$
percolation; lastly, let $Y_n$ be the set of vertices in $\TT_n$ that
are connected to $\rtt$ in $p_c$ percolation.

\section{Quenched results}
\subsection{Moments}

For $k \geq j$, let $\mathcal{C}_j(k)$ denote the set of
$j$-compositions of $k$, i.e.\ ordered $j$-tuples of positive integers
that sum to $k$. Define
\[
c_{k,j} := p_c^k \sum_{a \in\mathcal{C}_j(k)} m_{a_1} m_{a_2}\cdots m_{a_j}
\]
where $m_r:= \E[\binom{Z}{r}]$. We use the following result from
\cite{mpr-quenched}:

\begin{thm}[\cite{mpr-quenched}] \label{thm:martingales}
	Define
	\[
	M_n^{(k)} := \E_\TT\left[\binom{|Y_n|}{k}\right] - \sum_{i =
		1}^{k-1}c_{k,i} \sum_{j = 0}^{n-1} \E_\TT\left[ \binom{|Y_j|}{i}
	\right] \,.
	\]
	If $\E[Z^{2k}]< \infty$, then $M_n^{(k)}$ is a martingale with
	respect to the filtration $(\T_n)$, and there exist constants $C_k$
	and $c_k$ so that
	\[
	\Vert M_{n+1}^{(k)} - M_n^{(k)} \Vert_{L^2} \leq C_k e^{-c_k n}\,.
	\]
\end{thm}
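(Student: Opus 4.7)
The plan is to handle the martingale property and the $L^2$ bound separately, with the second relying on a two-copy percolation trick that converts the overlap of two independent critical clusters into a genuinely subcritical quantity.

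For the martingale identity, write $N_k(\TT, n) := \E_\TT[\binom{|Y_n|}{k}]$, so $M_n^{(k)} = N_k(\TT,n) - \sum_{i=1}^{k-1} c_{k,i}\sum_{j=0}^{n-1} N_i(\TT,j)$. The one-step calculation to establish is
\[\E[N_k(\TT, n+1) \mid \T_n] = \sum_{j=1}^{k} c_{k,j}\, N_j(\TT, n),\]
from which the martingale property is immediate since $c_{k,k} = p_c^k \mu^k = 1$ and the remaining $j<k$ terms match the telescoping sum in $M_n^{(k)}$. To derive this identity, decompose an arbitrary $k$-subset of $\TT_{n+1}$ according to its distinct depth-$n$ ancestors: if those ancestors are $V = \{v_1,\ldots,v_j\} \subset \TT_n$ with multiplicities $(k_1,\ldots,k_j) \in \mathcal{C}_j(k)$, then the quenched probability the subset lies in $Y_{n+1}$ factors as $\P_\TT[V \subset Y_n]\cdot p_c^k$, because percolation at depth $\leq n$ is independent of the $k$ edges from depth $n$ to $n+1$. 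Summing over such subsets contributes the combinatorial weight $\prod_i \binom{Z_1(v_i)}{k_i}$. Conditioning on $\T_n$ renders the $\{Z_1(v)\}_{v \in \TT_n}$ i.i.d.\ copies of $Z$, independent of the $\T_n$-measurable factor $\P_\TT[V \subset Y_n]$, so the expectation of $\prod_i \binom{Z_1(v_i)}{k_i}$ becomes $\prod_i m_{k_i}$, and summing over $\mathcal{C}_j(k)$ produces exactly $c_{k,j}$.

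For the $L^2$ bound, note $M_{n+1}^{(k)} - M_n^{(k)} = N_k(\TT, n+1) - \E[N_k(\TT, n+1) \mid \T_n]$, giving $\|M_{n+1}^{(k)} - M_n^{(k)}\|_{L^2}^2 = \E[\Var(N_k(\TT,n+1) \mid \T_n)]$. Using the decomposition $N_k(\TT, n+1) = p_c^k \sum_V f(V) g(V)$ with $f(V) := \P_\TT[V \subset Y_n]$ (which is $\T_n$-measurable) and $g(V)$ the polynomial $\sum_{(k_i)\in \mathcal{C}_{|V|}(k)} \prod_i \binom{Z_1(v_i)}{k_i}$ in the i.i.d.\ variables $\{Z_1(v)\}_{v \in V}$, the conditional variance expands as a double sum over pairs $(V, V')$ of subsets of $\TT_n$. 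By independence, $\mathrm{Cov}(g(V), g(V') \mid \T_n)$ vanishes when $V \cap V' = \emptyset$ and is otherwise uniformly bounded in terms of $\E[Z^{2k}]$. The task therefore reduces to bounding $\E[\sum_{V \cap V' \neq \emptyset} f(V) f(V')]$.

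The crucial observation is that $f(V) f(V') = \P_\TT[V \subset Y_n^{(1)},\, V' \subset Y_n^{(2)}]$ for two independent critical percolations $Y_n^{(1)}, Y_n^{(2)}$ on the same $\TT$. Any vertex of $V \cap V'$ must then lie in $Y_n^{(1)} \cap Y_n^{(2)}$, whose cluster corresponds to a Galton-Watson process with offspring $\mathrm{Bin}(Z, p_c^2)$, having mean $p_c^2 \mu = p_c < 1$ and hence \emph{subcritical}. Under $\E[Z^{2k}] < \infty$, standard subcritical-branching moment bounds yield $\E[|Y_n^{(1)} \cap Y_n^{(2)}|^p] = O(p_c^n)$ for $p \leq 2k$, while the critical quantities $|Y_n^{(i)}|$ only have polynomial-in-$n$ moments inherited from Theorem \ref{thm:annealed}. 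Applying H\"older's inequality with exponents chosen so that the subcritical factor $|Y_n^{(1)} \cap Y_n^{(2)}|^l$ (with $l = |V \cap V'| \geq 1$) carries the exponential decay while the critical factors contribute only polynomial prefactors, and then summing over the finitely many pair types $(|V|, |V'|, l)$, yields the claimed bound $\|M_{n+1}^{(k)} - M_n^{(k)}\|_{L^2} \leq C_k e^{-c_k n}$.

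The main obstacle is this final H\"older estimate: arranging the exponents so that the total moment demand on $Z$ stays within the $2k$-th moment budget supplied by the hypothesis, and verifying that the polynomial-in-$n$ prefactors from the critical moments are dominated by the exponential decay coming from the subcritical intersection. The conceptual point is that although each $N_k(\TT,n)$ itself grows polynomially in $n$, its martingale fluctuations are governed by the truly subcritical two-copy overlap.
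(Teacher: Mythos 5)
Your proposal is correct in substance, but it is worth noting that the paper itself does not prove this statement at all: it is imported wholesale from \cite{mpr-quenched}, with the martingale property attributed to Lemma 4.1 and the $L^2$ increment bound to Theorem 4.4 of that reference. So what you have written is a self-contained derivation of a cited result rather than a variant of an argument appearing in this paper. Your two main steps check out. The one-step identity $\E[N_k(\TT,n+1)\mid\T_n]=\sum_{j=1}^k c_{k,j}N_j(\TT,n)$ is exactly right: each $k$-subset of $\TT_{n+1}$ determines its parent set $V\subset\TT_n$ and a composition in $\mathcal{C}_{|V|}(k)$, the quenched probability factors as $\P_\TT[V\subset Y_n]\,p_c^k$ with the first factor $\T_n$-measurable, and averaging the offspring counts (conditionally i.i.d.\ copies of $Z$ given $\T_n$) produces $c_{k,j}$; since $c_{k,k}=p_c^k\mu^k=1$, the telescoping gives the martingale property (integrability being immediate from $\E[Z^{2k}]<\infty$). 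For the increment bound, the identity $M_{n+1}^{(k)}-M_n^{(k)}=N_k(\TT,n+1)-\E[N_k(\TT,n+1)\mid\T_n]$ and the reduction to $\E\big[\sum_{V\cap V'\neq\emptyset}f(V)f(V')\big]$ via the vanishing of conditional covariances for disjoint $V,V'$ are correct, and the two-copy observation that a shared vertex must lie in $Y_n^{(1)}\cap Y_n^{(2)}$, which is the level set of $p_c^2$-percolation and hence a subcritical branching process with mean $p_c$, is precisely the mechanism that produces the exponential decay. The H\"older bookkeeping you flag as the main obstacle does close: with $|V|=j\le k$, $|V'|=j'\le k$, $l=|V\cap V'|\ge1$, the total degree is $j+j'-l\le 2k-1<2k$, so one can take exponents $r_1,r_2,r_3$ with $1/r_1\ge l/(2k)$, $1/r_2\ge (j-l)/(2k)$, $1/r_3\ge (j'-l)/(2k)$ summing to $1$, staying within the $\E[Z^{2k}]$ budget, and the subcritical factor contributes $O(p_c^{n/r_1})$ against only polynomial-in-$n$ growth from the critical factors. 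Two points you should still supply for completeness: the standard subcritical moment estimate $\E[S_n^p]=O(m^n)$ under a finite $p$-th offspring moment (e.g.\ by induction on $p$ through the branching recursion, or by combining $\P[S_n>0]\le m^n$ with uniform conditional moments), and the analogous polynomial bound $\E[|Y_n|^p]=O(n^{p-1})$ for the critical copies, which does not follow directly from Theorem \ref{thm:annealed} as stated and needs its own (standard) justification.
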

While Theorem \ref{thm:martingales} is not stated precisely this way
in \cite{mpr-quenched}, the martingale property follows from \cite
[Lemma $4.1$]{mpr-quenched}, while the $L^2$ bound on the increments is
given in \cite[Theorem $4.4$]{mpr-quenched}. This gives us the leading
term of each $\E_\TT\left[|Y_n|^k\right]$.
\begin{proposition}\label{pr:factorial-moments}
	For each $k$,
	\[
	\E_\TT\left[|Y_n|^k\right]n^{-(k-1)} \to k! \left(\frac{p_c^2
		\phi''(1)}{2} \right)^{k-1} W
	\]
	almost surely and in $L^2$.
\end{proposition}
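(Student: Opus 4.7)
The plan is to induct on $k$, using Theorem \ref{thm:martingales} as the engine. For $k=1$, the quenched expectation is computed directly: $\E_\TT[|Y_n|] = p_c^n Z_n = W_n$, which converges to $W$ a.s.\ and (under the moment hypothesis) in $L^2$. For $k \geq 2$, the summability of $\|M_{n+1}^{(k)} - M_n^{(k)}\|_{L^2}$ from Theorem \ref{thm:martingales} implies that $M_n^{(k)}$ converges a.s.\ and in $L^2$, so in particular $M_n^{(k)}/n^{k-1} \to 0$ in both senses. Rearranging the definition of $M_n^{(k)}$,
\[
\E_\TT\left[\binom{|Y_n|}{k}\right] = M_n^{(k)} + \sum_{i=1}^{k-1} c_{k,i}\sum_{j=0}^{n-1}\E_\TT\left[\binom{|Y_j|}{i}\right],
\]
so the task reduces to extracting the leading asymptotics of the double sum.

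Since $k!\binom{x}{k} = x^k + O(x^{k-1})$, the induction hypothesis gives $\E_\TT[\binom{|Y_j|}{i}] \sim (p_c^2 \phi''(1)/2)^{i-1} W \cdot j^{i-1}$ a.s.\ and in $L^2$ for each $1 \leq i \leq k-1$. For $i \leq k-2$ this forces $c_{k,i}\sum_{j=0}^{n-1}\E_\TT[\binom{|Y_j|}{i}] = O(n^{i}) = o(n^{k-1})$, while for $i = k-1$ a Ces\`aro-type averaging yields
\[
\frac{1}{n^{k-1}} \sum_{j=0}^{n-1} \E_\TT\left[\binom{|Y_j|}{k-1}\right] \to \frac{1}{k-1}\left(\frac{p_c^2 \phi''(1)}{2}\right)^{k-2} W
\]
a.s.\ and in $L^2$. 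The coefficient $c_{k,k-1}$ is computed by noting that the only compositions of $k$ into $k-1$ positive parts are the $k-1$ permutations of $(2,1,\ldots,1)$, giving $c_{k,k-1} = (k-1)p_c^k \mu^{k-2} m_2 = (k-1) p_c^2 \phi''(1)/2$, since $p_c = 1/\mu$ and $m_2 = \phi''(1)/2$. Combining these yields $\E_\TT[\binom{|Y_n|}{k}]/n^{k-1} \to (p_c^2 \phi''(1)/2)^{k-1} W$; multiplying by $k!$ and absorbing the $O(n^{k-2})$ error from $|Y_n|^k - k!\binom{|Y_n|}{k}$ (a polynomial of degree $\leq k-1$ in $|Y_n|$, hence controlled by induction) delivers the claim.

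The main technical step requiring care is propagating $L^2$ convergence through the Ces\`aro averaging: if $X_j/j^{k-2} \to X$ in $L^2$, writing $X_j = j^{k-2} X + r_j$ with $\|r_j\|_{L^2} = o(j^{k-2})$, the triangle inequality yields $\|\sum_{j=0}^{n-1} r_j\|_{L^2} \leq \sum_{j} \|r_j\|_{L^2} = o(n^{k-1})$ by an elementary estimate, so $\frac{1}{n^{k-1}}\sum_{j} X_j \to X/(k-1)$ in $L^2$. The corresponding a.s.\ Ces\`aro statement is pointwise and standard. Everything else is essentially encoded in Theorem \ref{thm:martingales} and the induction hypothesis; the combinatorial identification of $c_{k,k-1}$ is what forces the exponent to come out clean.
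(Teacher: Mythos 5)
Your proposal is correct and follows essentially the same route as the paper: induction on $k$, convergence of the martingales $M_n^{(k)}$ from Theorem \ref{thm:martingales}, identification of the $i=k-1$ term as dominant with $c_{k,k-1}=(k-1)p_c^2\phi''(1)/2$, and the Ces\`aro estimate $\sum_{j=0}^{n-1} j^{d} \sim n^{d+1}/(d+1)$. You simply spell out two details the paper leaves implicit (the passage between $\binom{|Y_n|}{k}$ and $|Y_n|^k$, and carrying $L^2$ convergence through the averaging), which is fine.
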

\begin{proof}
	By Theorem \ref{thm:martingales}, $M_n^{(k)}$ is a martingale with
	uniformly bounded $L^2$ norm for each $k$. By the $L^p$ martingale
	convergence theorem, $M_n^{(k)}$ converges in $L^{2}$ and almost
	surely. We now proceed by induction on $k$. For $k = 1$, $\E_\TT
	[|Y_n|] = W_n$ which converges to $W$. Suppose that the proposition
	holds for all $j < k$. Then by convergence of $M_n^{(k)}$,
	\[
	\E_\TT\left[\binom{|Y_n|}{k} \right]n^{-(k-1)} = \sum_{i
		=1}^{k-1} c_{k,i} n^{-(k-1)}\sum_{j = 0}^{n-1} \E_\TT\left[\binom
	{|Y_j|}{i} \right] + o(1)
	\]
	
	where the $o(1)$ term is both in $L^2$ and almost surely.
	By induction, the leading term is the contribution from $i = k-1$.
	Noting that $c_{k,k-1} = (k-1)p_c^2 \frac{\phi''(1)}{2}$ and the fact
	that $\sum_{j = 0}^{n-1} j^d \sim\frac{1}{d+1}n^{d+1}$ completes the proof.
\end{proof}

\subsection{Survival probabilities}

Throughout, define $\lambda:= \frac{2}{p_c^2 \phi''(1)}$. Our first
task is to find a quenched analogue of Kolmogorov's estimate:
\begin{thm} \label{thm:surv-prob}
	If $\E[Z^4] < \infty$, then
	\[
	n\cdot\P_\TT[|Y_n| > 0] \to W \lambda
	\]
	almost surely.
\end{thm}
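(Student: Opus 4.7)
The approach is to deduce the quenched estimate from the annealed statement of Theorem~\ref{thm:annealed}$(a)$ by a law-of-large-numbers argument applied to the independent Galton--Watson subtrees rooted at depth $k$, taking $k \to \infty$ alongside $n$.

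First I would decompose $\TT$ at depth $k$. Write $g_n(\TT) := \P_\TT[|Y_n| > 0]$, and for each $v \in \TT_k$ let $A_v$ be the event that $\rtt \conn v$ in $p_c$-percolation and that there is an open path inside $\TT(v)$ from $v$ to $\TT_n$. Then $\{|Y_n| > 0\} = \bigcup_{v \in \TT_k} A_v$ with $\P_\TT[A_v] = p_c^k g_{n-k}(\TT(v))$, and for distinct $u, v \in \TT_k$ whose most recent common ancestor has depth $j(u,v) \leq k-1$, independence of disjoint edge sets gives $\P_\TT[A_u \cap A_v] = p_c^{2k - j(u,v)} g_{n-k}(\TT(u)) g_{n-k}(\TT(v)) \leq p_c^{k+1} g_{n-k}(\TT(u)) g_{n-k}(\TT(v))$. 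Inclusion--exclusion then yields the sandwich
\[
p_c^k S_k - p_c^{k+1} S_k^2 \;\leq\; g_n(\TT) \;\leq\; p_c^k S_k, \qquad S_k := \sum_{v \in \TT_k} g_{n-k}(\TT(v)).
\]

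Next I would analyse the main term by a quenched law of large numbers. Conditional on $\T_k$, the trees $\{\TT(v)\}_{v \in \TT_k}$ are iid copies of $\TT$, so the summands $(n-k) g_{n-k}(\TT(v))$ are iid with annealed mean $(n-k)\E[g_{n-k}(\TT)] \to \lambda$ by Theorem~\ref{thm:annealed}$(a)$. Given a uniform second-moment bound $\sup_n \E[((n-k) g_{n-k}(\TT))^2] < \infty$, Chebyshev's inequality (conditional on $\T_k$) gives $\frac{1}{Z_k}\sum_{v \in \TT_k}(n-k) g_{n-k}(\TT(v)) \to \lambda$ in probability as $n, Z_k \to \infty$. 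Rescaling yields $n p_c^k S_k = W_k \lambda + o(1)$, while the Bonferroni correction satisfies $n \cdot p_c^{k+1} S_k^2 = O(\mu^k / n)$, which vanishes whenever $\mu^k = o(n)$.

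The technical heart is the uniform variance bound $\sup_n \E[(n g_n(\TT))^2] < \infty$. Interpreting $g_n(\TT)^2$ as the probability that two independent $p_c$-percolations on the same $\TT$ both reach depth $n$, we have $n^2 \E_\GW[g_n^2] = n^2 \P[\text{both survive}]$. The first-moment identity $\E[|Y_n^{(1)}| |Y_n^{(2)}|] = \E[W_n^2] \to \E[W^2] < \infty$, combined with a Yaglom-type control on the conditional joint distribution of $(|Y_n^{(1)}|/n, |Y_n^{(2)}|/n)$ given both survive (built from the moment estimates of Proposition~\ref{pr:factorial-moments}, which is where the hypothesis $\E[Z^4] < \infty$ enters), shows $\P[\text{both survive}] = \Theta(1/n^2)$ and hence the desired bound.

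Finally, choosing $k = k(n) \to \infty$ slowly enough that $\mu^{k(n)} = o(n)$ (for instance $k(n) = \lfloor \log \log n \rfloor$), we have $W_{k(n)} \to W$ almost surely, and the above steps give $n g_n(\TT) = W_{k(n)} \lambda + o(1)$ in probability. To upgrade to almost-sure convergence I would invoke Borel--Cantelli along a sufficiently sparse subsequence, using the monotonicity $g_n \geq g_{n+1}$ and the uniform moment bound to interpolate between subsequence indices, thereby obtaining $n g_n(\TT) \to W \lambda$ almost surely. The main obstacle is the uniform second-moment bound; crude estimates yield only $\E[(n g_n)^2] = O(n)$, which is insufficient to overpower the Bonferroni error simultaneously with the LLN concentration, so the two-particle analysis above is essential.
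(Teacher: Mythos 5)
Your overall architecture matches the paper's: decompose at an intermediate depth, use that the subtrees hanging at that depth are i.i.d.\ copies of $\TT$, feed in the annealed Kolmogorov estimate (Theorem \ref{thm:annealed}$(a)$), control the pairwise Bonferroni term, and upgrade to a.s.\ convergence by Chebyshev plus Borel--Cantelli. The genuine gap is exactly where you locate the ``technical heart'': the uniform bound $\sup_n \E\big[(n\,\P_\TT[|Y_n|>0])^2\big]<\infty$ is asserted but not proved. Your proposed route --- interpreting $\E[g_n^2]$ as the annealed probability that two independent percolations on the same tree both reach depth $n$, and deducing $\P[\text{both survive}]=\Theta(n^{-2})$ from $\E[|Y_n^{(1)}||Y_n^{(2)}|]=\E[W_n^2]$ together with ``Yaglom-type control given both survive'' --- is circular as stated: to convert the first-moment identity into an upper bound on $\P[\text{both survive}]$ you need a lower bound of order $n^2$ on $\E[|Y_n^{(1)}||Y_n^{(2)}|\mid\text{both survive}]$, and such a conditional (two-particle Yaglom) statement is not a consequence of Proposition \ref{pr:factorial-moments}; quenched moment asymptotics alone can never upper-bound a survival probability, and proving the conditional limit law presupposes precisely the $\Theta(n^{-2})$ estimate you are after. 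The paper closes this hole with a structural input you are missing: the electrical-network bound (Lemma \ref{lem:prop-sandwich}) gives the deterministic-in-$n$ estimate $n\,\P_\TT[|Y_n|>0]\le 2\overline{W}/(1-p_c)$ with $\overline{W}=\sup_k W_k\in L^2$, which yields your uniform second-moment bound (and the pointwise control needed for the Bonferroni error and the LLN variance) in one line. With that lemma in hand, your argument essentially becomes the paper's proof.

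A secondary, fixable problem is the choice $k(n)=\lfloor\log\log n\rfloor$. Your conditional Chebyshev error is of order $1/Z_{k(n)}\approx (\log n)^{-\log\mu}$, which is not summable along any subsequence with ratio tending to $1$ (as required for the monotonicity interpolation of $g_n$) unless $\mu$ is large; and since $\E[1/Z_k]\ge\mu^{-k}$, integrating out the conditioning cannot rescue full-sequence summability while you also insist on $\mu^k=o(n)$. You need $k$ of order $\log n$ (tuning the constant against your crude bound $p_c^{k+1}S_k^2$), or better, follow the paper: take $m=\lceil n^{1/4}\rceil$ and keep the factor $\P_\TT[\rtt\conn(u,v)]$ inside the pairwise sum, so that the Bonferroni term is governed by moments of $|Y_m|$ (polynomial in $m$) rather than by $|\TT_m|$ (of size $\mu^m$); then the error probabilities are $O(n^{-3/2})$ and $O(\mu^{-m})$, summable over the full sequence, and no subsequence/interpolation step is needed.
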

The proof utilizes the Bonferroni inequalities. In order to control the
second-order term, the variance of a sum of pairs is calculated,
thereby introducing the requirement of $\E[Z^4] < \infty$. We begin
first by proving upper and lower bounds:
\begin{lemma} \label{lem:prop-sandwich}
	For each $n$,
	\begin{equation*}
	\frac{n\cdot\E_\TT[|Y_n|]^2}{\E_\TT[|Y_n|^2]}\leq n\cdot\P_\TT
	[|Y_n| > 0 ] \leq\frac{2 \overline{W}}{1 - p_c}
	\end{equation*}
	where, $\overline{W} = \sup_n W_n$.
\end{lemma}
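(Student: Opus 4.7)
The lemma is a two-sided sandwich on $n\cdot \P_\TT[|Y_n|>0]$, and I would prove the two inequalities separately.

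\emph{Lower bound.} This is immediate from the Paley--Zygmund / second-moment method. Writing $|Y_n|=|Y_n|\cdot\one_{|Y_n|>0}$ and applying Cauchy--Schwarz under $\P_\TT$ gives
\[
\E_\TT[|Y_n|]^2 \leq \E_\TT[|Y_n|^2]\cdot \P_\TT[|Y_n|>0],
\]
which rearranges to the left-hand inequality after multiplying by $n/\E_\TT[|Y_n|^2]$.

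\emph{Upper bound, regular-tree model.} The template for the argument is the $d$-regular tree, where $p_c = 1/d$ and $W_n\equiv 1$. There the Bonferroni-type inequality $(1-x)^d \geq 1 - dx + \binom{d}{2}x^2$ applied to the recursion $S_n = 1 - (1-p_c S_{n-1})^d$ yields $S_{n-1} - S_n \geq \tfrac{1-p_c}{2}\,S_{n-1}^2$, since $\binom{d}{2}p_c^2 = (1-p_c)/2$. Dividing by $S_nS_{n-1}$ gives $1/S_n - 1/S_{n-1} \geq (1-p_c)/2$, and telescoping from $1/S_0 = 1$ produces $nS_n \leq 2/(1-p_c)$, matching the lemma with $\overline{W}=1$.

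\emph{Extension to general $\TT$.} For the Galton--Watson tree, I would apply the vertex-wise recursion $1 - S_n(v) = \prod_{w\text{ child of }v}(1-p_c S_{n-1}(w))$ together with the multi-factor Bonferroni $\prod(1-x_i) \geq 1 - \sum x_i + \sum_{i<j}x_i x_j$, which yields an inequality of the form
\[
S_n(v) \leq p_c\sum_{w} S_{n-1}(w) - \tfrac{p_c^2}{2}\Big[\big(\textstyle\sum_w S_{n-1}(w)\big)^2 - \sum_w S_{n-1}(w)^2\Big].
\]
Using the Markov-type bound $S_m(w)\leq W_m(w)$ together with the identity $p_c\sum_{w\text{ child of }v} W_m(w) = W_{m+1}(v)\leq \overline{W}$, one replaces the quadratic correction by a term controlled by $\overline{W}$ alone, obtaining an inequality $1/S_n - 1/S_{n-1} \geq (1-p_c)/(2\overline{W})$; telescoping from $1/S_0=1$ then gives the claimed bound $nS_n \leq 2\overline{W}/(1-p_c)$.

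\emph{Main obstacle.} The most delicate step is handling the inhomogeneity of a general Galton--Watson tree: unlike in the regular case, the children of a vertex can have wildly different survival probabilities, and the Bonferroni correction involves $\sum_w S_{n-1}(w)^2$ rather than a neat function of $\sum_w S_{n-1}(w)$. Converting this inhomogeneous correction into a clean quadratic term controlled by $\overline{W}$---tight enough that the constants line up to give exactly $2\overline{W}/(1-p_c)$---is where the technical heart of the argument lies, and is likely what forces the rather clean formulation in terms of the envelope $\overline{W}=\sup_n W_n$ rather than individual $W_n(v)$.
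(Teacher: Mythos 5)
Your lower bound is exactly the paper's (Paley--Zygmund / second moment), so that part is fine. The upper bound, however, has a genuine flaw at its core: the inequality you invoke, $(1-x)^d \geq 1 - dx + \binom{d}{2}x^2$ (and its multi-factor version $\prod_i(1-x_i) \geq 1 - \sum_i x_i + \sum_{i<j}x_ix_j$), is false --- Bonferroni goes the other way. Truncating inclusion--exclusion after the second term \emph{underestimates} the union probability, so $\prod_i(1-x_i) \leq 1 - \sum_i x_i + \sum_{i<j}x_ix_j$; already for $d=3$ one has $(1-x)^3 = 1-3x+3x^2-x^3 < 1-3x+3x^2$ for all $x>0$. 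With the correct direction, the recursion $S_n = 1-(1-p_cS_{n-1})^d$ yields $S_{n-1}-S_n \leq \frac{1-p_c}{2}S_{n-1}^2$, i.e.\ an \emph{upper} bound on the decrement and hence an upper bound on $1/S_n - 1/S_{n-1}$ --- which is the lower-bound (Kolmogorov) direction, not the upper bound you need. The genuine upper bound in the classical recursive argument requires Taylor with $\phi''$ at an intermediate point and only gives the constant $2/(1-p_c)$ asymptotically, whereas the lemma asserts a clean bound valid for every $n$. On top of this, your extension to general $\TT$ telescopes $1/S_n - 1/S_{n-1}$, but on an inhomogeneous tree the recursion relates $S_n$ at a vertex to $S_{n-1}$ at its \emph{children}, which are different quantities, so the telescoping you write is not even well defined without a further convexity/averaging step; this is precisely the obstacle you flag at the end but do not resolve.

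The paper avoids all of this with a different tool: the electrical-network bound $\P_\TT[\rtt\conn\TT_n] \leq 2/\mathscr{R}(\rtt\conn\TT_n)$ (Lyons--Peres, Theorem 5.24), where an edge from depth $k-1$ to $k$ carries resistance $(1-p_c)/p_c^k$. Shorting all vertices at each depth --- which is exactly how the inhomogeneity across children is handled --- gives the series lower bound
\begin{equation*}
\mathscr{R}(\rtt\conn\TT_n) \;\geq\; \sum_{k=1}^{n}\frac{1-p_c}{Z_k\,p_c^{k}} \;=\; \sum_{k=1}^{n}\frac{1-p_c}{W_k} \;\geq\; (1-p_c)\,\frac{n}{\overline{W}}\,,
\end{equation*}
which immediately yields $n\cdot\P_\TT[|Y_n|>0] \leq 2\overline{W}/(1-p_c)$ for every $n$. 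If you want to salvage a recursion-based proof you would need a monotonicity or convexity argument playing the role of this shorting step, but as written the key inequality in your argument is false and the derivation does not go through even for the regular tree.
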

\begin{proof}
	The lower bound is the Paley-Zygmund inequality. For the upper bound,
	we use \cite[Theorem 5.24]{LP-book}:
	\[
	\P_\TT[|Y_n| > 0] \leq\frac{2}{\mathscr{R}(\rtt\conn\TT_n)}
	\]
	where $\mathscr{R}(\rtt\conn\TT_n)$ is the equivalent resistance
	between the root and $\TT_n$ when all of $\TT_n$ is shorted to a
	single vertex and each edge branching from depth $k-1$ to $k$ has
	resistance $\frac{1 - p_c}{p_c^k}$. Shorting together all vertices at
	depth $k$ for each $k$ gives the lower bound
	\begin{equation*}
	\mathscr{R}(\rtt\conn\TT_n) \geq\sum_{k = 1}^{n}\frac{1 -
		p_c}{Z_k p_c^k} = \sum_{k = 1}^{n}\frac{1 - p_c}{W_k}\geq(1 -
	p_c)\frac{n}{\overline{W}}\,.\qedhere
	\end{equation*}
\end{proof}

\noindent{\emph{Proof of Theorem }\ref{thm:surv-prob}: } For each
fixed $m < n$, the Bonferroni inequalities imply
\begin{equation} \label{eq:bonf}
\left|n\P_\TT[\rtt\conn\TT_n] - n\sum_{v \in\TT_m}\P_\TT
[\rtt\conn v \conn\TT_n]\right| \leq n\sum_{u,v \in\binom{\TT
		_m}{2}} \P_\TT[\rtt\conn(u,v) \conn\TT_n]\,.
\end{equation}

If we can show that the right-hand side of \eqref{eq:bonf} converges
a.s. to zero for some choice of $m = m(n)$, then the survival
probability is sufficiently close to a sum of i.i.d. random variables.
The random variables $\P_\TT[\rtt\conn v \conn\TT_n]$ are i.i.d.\
with mean $p_c^m \P[\rtt\conn\TT_{n - m}]$, implying that the sum
is close to $W_m \P[\rtt\conn\TT_{n - m}]$. Applying the annealed
result Theorem \ref{thm:annealed} would then complete the proof after
noting that $W_m \to W$ almost surely provided $m \to\infty$. The
remainder of the proof follows this sketch.

Set $m = \lceil n^{1/4} \rceil$; we then bound the second moment
\begin{align*}
\E&\left[ \left(\sum_{u,v \in\binom{\TT_m}{2}} \P_\TT[\rtt
\conn(u,v) \conn\TT_n]\right)^2\right] \\
&\qquad=
\E\left[\left(\sum_{u,v \in\binom{\TT_m}{2}}\P_\TT[\rtt\conn
(u,v)]\P_\TT[u\conn\TT_n]\P_\TT[v \conn\TT_n] \right)^2 \right]
\\
&\qquad= \E\left[ \E\left[\left(\sum_{u,v \in\binom{\TT
		_m}{2}}\P_\TT[\rtt\conn(u,v)]\P_\TT[u\conn\TT_n]\P_\TT[v
\conn\TT_n] \right)^2 \, \Bigg| \, \T_m \right]\right] \\
&\qquad= \E\left[ \E\left[\left(\sum_{u,v \in\binom{\TT
		_m}{2}}\P_\TT[\rtt\conn(u,v)]\P_\TT[u\conn\TT_n]\P_\TT[v
\conn\TT_n] \right)^2 \, \Bigg| \, \T_m \right]^{(1/2)\cdot
	2}\right] \\
&\qquad\leq\E\left[ \left(\sum_{u,v \in\binom{\TT_m}{2}} \P
_\TT[\rtt\conn(u,v)] \left\Vert\P_\TT[u \conn\TT_n] \P_\TT[v
\conn\TT_n] \right\Vert_{L^2} \right)^2\right] &&\\
&\qquad\qquad\qquad\qquad\qquad\qquad\qquad\qquad\qquad\qquad\qquad\qquad\qquad\qquad\qquad\text{ by the triangle inequality}& \\
&\qquad\leq\left(\frac{2}{1 - p_c}\right)^4 \E[\overline{W}^2]^2
\cdot(n-m)^{-4} \E\left[\binom{|Y_m|}{2}^2 \right]\qquad\qquad\quad\text{ by
	Lemma }\ref{lem:prop-sandwich}&&\\
&\qquad\leq C m^2 n^{-4}\qquad\qquad\qquad\qquad\qquad\qquad\qquad\qquad\qquad\qquad\qquad\ \  \text{ by Theorem }\ref{thm:martingales}\,.&&
\end{align*}

Multiplying by $n$, the second moment of the right-hand side of \eqref
{eq:bonf} is bounded above by $C m^2 n^{-2} = O(n^{-3/2})$ which is
summable in $n$. By Chebyshev's Inequality together with the
Borel-Cantelli Lemma, the right-hand side of \eqref{eq:bonf} converges
to zero almost surely. This implies
\begin{equation} \label{eq:surv-prob-avg}
n\P_\TT[\rtt\conn\TT_n] = n \sum_{v \in\TT_m} \P_\TT[\rtt
\conn v \conn\TT_n] +o(1) = \sum_{v \in\TT_m} \frac{n \P_\TT[v
	\conn\TT_n]}{\mu^m }+ o(1)\,.
\end{equation}

We want to show that the right-hand side of \eqref{eq:surv-prob-avg}
converges to $W \lambda$, so we first calculate
\begin{align*}
\Var\bigg[ \sum_{v \in\TT_m} &\frac{n \P_\TT[v \conn\TT_n] -
	n \P[\rtt\conn\TT_{n - m}]}{\mu^m} \bigg] \\
&= \E\left[\Var\left[ \sum_{v \in\TT_m} \frac{n \P_\TT[v
	\conn\TT_n] - n \P[\rtt\conn\TT_{n - m}]}{\mu^m} \, \bigg| \,
\T_m\right] \right] \\
&= \E\left[\frac{1}{\mu^{2m}} \sum_{v \in\TT_m} \Var[n \P_\TT
[v \conn\TT_n]] \right] \\
&\leq\frac{C}{\mu^m}
\end{align*}
where the last inequality is via Lemma \ref{lem:prop-sandwich}. Since
this is summable in $n$, Chebyshev's Inequality and the Borel-Cantelli
Lemma again imply
\[
\sum_{v \in\TT_m} \frac{n \P_\TT[v \conn\TT_n]}{\mu^m} = \sum
_{v \in\TT_m} \frac{n \P[\rtt\conn\TT_{n - m}]}{\mu^m} + o(1) =
W_m (n \cdot\P[\rtt\conn\TT_{n - m}]) + o(1)\,.
\]
Taking $n \to\infty$ and utilizing Theorem \ref{thm:annealed}
together with \eqref{eq:surv-prob-avg} completes the proof. $\Cox$

\subsection{Conditioned survival}

\begin{thm} \label{thm:cond-surv}
	Suppose $\E[Z^p] < \infty$ for all $p \geq1$. Then the conditional
	variable $(|Y_n|/n \, | \, |Y_n| > 0)$ converges in distribution to an
	exponential random variable with mean $\lambda^{-1}$ for $\GW$-almost
	every $\TT$.
\end{thm}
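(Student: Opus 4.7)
The plan is to invoke the method of moments, combining the factorial-moment asymptotics of Proposition \ref{pr:factorial-moments} with the survival-probability asymptotics of Theorem \ref{thm:surv-prob}. Rewriting the conclusion of Proposition \ref{pr:factorial-moments} using $\lambda = 2/(p_c^2 \phi''(1))$, for each fixed $k \geq 1$ we have
\[
\frac{\E_\TT[|Y_n|^k]}{n^{k-1}} \longrightarrow \frac{k!\, W}{\lambda^{k-1}} \qquad \GW\text{-a.s.},
\]
while Theorem \ref{thm:surv-prob} gives $n\,\P_\TT[|Y_n|>0] \to W\lambda$ a.s. Since $Z \geq 1$ and $\E[Z \log Z] < \infty$, Kesten--Stigum yields $W > 0$ almost surely, so on a $\GW$-full measure event we may divide to obtain
\[
\E_\TT\!\left[ \left(\frac{|Y_n|}{n}\right)^{k} \,\bigg|\, |Y_n| > 0 \right] \;=\; \frac{\E_\TT[|Y_n|^k]/n^{k-1}}{n\, \P_\TT[|Y_n|>0]} \;\longrightarrow\; \frac{k!}{\lambda^k},
\]
which is precisely the $k$th moment of the $\mathrm{Exp}(\lambda)$ distribution. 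The crucial feature is that the random prefactor $W$ cancels between numerator and denominator, producing a deterministic limit independent of $\TT$.

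To convert moment convergence into convergence in distribution, I would note that $\mathrm{Exp}(\lambda)$ is uniquely determined by its moments: its moment generating function has positive radius of convergence, so Carleman's condition is trivially met. Taking a countable intersection over $k \in \N$ of the full-measure events above yields a single $\GW$-full measure event on which every moment of the conditional distribution of $|Y_n|/n$ converges to the corresponding exponential moment; the method of moments then delivers convergence in distribution on this event.

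In this sense the proof is essentially assembly rather than obstruction: the nontrivial inputs have already been established in Proposition \ref{pr:factorial-moments} and Theorem \ref{thm:surv-prob}, which were engineered precisely so that the $W$-dependence divides out. The only substantive constraint imposed by this approach is the hypothesis $\E[Z^p]<\infty$ for every $p$, which is inherited from needing Proposition \ref{pr:factorial-moments} (and hence Theorem \ref{thm:martingales}) to apply at every $k$; this matches the remark immediately following Theorem \ref{thm:main}.
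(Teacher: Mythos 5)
Your proposal is correct and follows essentially the same route as the paper: the method of moments applied to $\E_\TT[(|Y_n|/n)^k \mid |Y_n|>0] = \E_\TT[|Y_n|^k]n^{-(k-1)}\cdot(n\P_\TT[|Y_n|>0])^{-1}$, with Proposition \ref{pr:factorial-moments} and Theorem \ref{thm:surv-prob} supplying the two factors and the $W$'s cancelling. Your explicit remarks that $W>0$ a.s. (so the division is legitimate) and that one intersects countably many full-measure events over $k$ are details the paper leaves implicit, but they do not change the argument.
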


By conditional random variable $(|Y_n|/n \, | \, |Y_n| > 0)$, we mean
the random variable with law $\P_\TT[|Y_n|/n \in\cdot\, | \, |Y_n|
> 0 ]$.

\begin{proof} The proof is via the method of moments. In particular,
	since the moment generating function of an exponential random variable
	has a positive radius of convergence, its distribution is uniquely
	determined by its moments. Thus, any sequence of random variables with
	each moment converging to the moment of an exponential random variable
	must converge in distribution to that exponential random variable \cite
	[Theorems $30.1$ and $30.2$]{billingsley}.
	
	Let $X_n$ be a random variable with distribution $(|Y_n|/n \, | \,
	|Y_n| > 0)$. It is sufficient to show $\E_\TT[X_n^k] \to k! \lambda
	^{-k}$ $\GW$-a.s. since $k! \lambda^{-k}$ is the $k$th moment of an
	exponential random variable. Proposition \ref{pr:factorial-moments}
	and Theorem \ref{thm:surv-prob} imply
	\begin{align*}
	\E_\TT[X_n^k] &= \frac{\E_\TT[|Y_n|^k]}{n^k \P_\TT[|Y_n| > 0]} \\
	&= \frac{\E_\TT[|Y_n|^k}{n^{k-1}} \cdot\frac{1}{n\cdot\P_\TT
		[|Y_n| > 0]} \\
	&\to k! W\lambda^{-(k-1)}\cdot\frac{1}{\lambda W} \\
	&= k! \lambda^{-k}\,.\qedhere
	\end{align*}
\end{proof}

More can be said about the structure of the open percolation cluster of
the root conditioned on $\rtt\conn\TT_n$, but we require two
general, more or less standard lemmas first.

\begin{lemma} \label{lem:condition}
	For any events $A$ and $B$ with $\P[B] \neq0$,
	\[
	|\P[A \| B] - \P[A]| \leq\P[B^c]\,.
	\]
\end{lemma}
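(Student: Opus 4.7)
The plan is a direct one-step calculation using the partition $\Omega = B \sqcup B^c$. I would first write $\P[A] = \P[A \cap B] + \P[A \cap B^c]$ and use $\P[A \cap B] = \P[A \| B]\P[B] = \P[A \| B](1 - \P[B^c])$. Substituting and rearranging yields the identity
\[
\P[A \| B] - \P[A] = \P[A \| B]\cdot\P[B^c] - \P[A \cap B^c].
\]
Both quantities $\P[A \| B]\cdot\P[B^c]$ and $\P[A \cap B^c]$ lie in the interval $[0,\P[B^c]]$ — the former because $\P[A \| B] \in [0,1]$, and the latter because $A \cap B^c \subset B^c$. Hence their difference has absolute value at most $\P[B^c]$, which is exactly the asserted bound.

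There is no serious obstacle here; the lemma reduces to an elementary identity followed by a trivial interval-length bound. The only subtle point is that the naive manipulation $\P[A \| B] - \P[A] = (\P[A \cap B] - \P[A]\P[B])/\P[B]$ introduces a spurious factor of $1/\P[B]$ and yields only the weaker bound $\P[B^c]/\P[B]$, which is useless when $\P[B]$ is small. Replacing $\P[B]$ by $1 - \P[B^c]$ in the numerator of $\P[A \| B]\P[B]$ eliminates the denominator entirely and lets the clean bound $\P[B^c]$ fall out immediately.
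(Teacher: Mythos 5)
Your proof is correct and follows essentially the same route as the paper: both decompose $\P[A]$ over $B$ and $B^c$ and bound the resulting $\P[B^c]$-sized term, the only cosmetic difference being that you keep $\P[A \cap B^c]$ rather than writing it as $\P[A \| B^c]\P[B^c]$ (which incidentally spares you the harmless caveat that $\P[B^c]$ could be zero). Nothing further is needed.
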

\begin{proof}
	Expand
	\[
	\P[A] = \P[A \| B](1 - \P[B^c]) + \P[A \| B^c] \P[B^c]
	\]
	and solve
	\[
	\P[A] - \P[A\| B] = (\P[A \| B^c] - \P[A \| B]) \P[B^c]\,.
	\]
	Taking absolute values and bounding $|\P[A \| B^c] - \P[A \| B]| \leq
	1$ completes the proof.
\end{proof}

\begin{lemma} \label{lem:sum-tails}
	Let $X_k$ be i.i.d. centered random variables with $\E[|X_1|^p] <
	\infty$ for some $p \in[2,\infty)$. Then there exists a constant
	$C_p$ so that
	\[
	\P\left[ \left|\sum_{k = 1}^n \frac{X_k}{n}\right| > t \right]
	\leq C_p t^{-p}n^{-p/2} + 2 \exp\left(-\frac{n t^2}{\Var
		[X_1]}\right)
	\]
	for all $t > 0$.
\end{lemma}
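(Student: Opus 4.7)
The plan is to prove this Fuk--Nagaev-style bound by truncation, which naturally produces the two terms on the right-hand side. Fix $t > 0$ and a truncation threshold $M = c\Var[X_1]/t$ with $c > 0$ to be selected, and decompose
\[
X_k = \left(X_k \one_{|X_k| \leq M} - \E[X_1 \one_{|X_1| \leq M}]\right) + \left(X_k \one_{|X_k| > M} - \E[X_1 \one_{|X_1| > M}]\right) =: X_k^{(b)} + X_k^{(u)},
\]
with both summands centered since $\E X_1 = 0$. By a union bound it suffices to control $\P[|\sum_k X_k^{(b)}/n| > t/2]$ and $\P[|\sum_k X_k^{(u)}/n| > t/2]$ separately; one summand will yield the exponential term, the other the polynomial term.

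For the bounded part I would apply Bernstein's inequality. Since $|X_k^{(b)}| \leq 2M$ and $\Var[X_k^{(b)}] \leq \Var[X_1]$, the Bernstein exponent is of order $-(nt)^2/(n\Var[X_1] + Mnt)$. Our choice $M \asymp \Var[X_1]/t$ makes the denominator a constant multiple of $n\Var[X_1]$, so the exponent becomes $-\Theta(nt^2/\Var[X_1])$, and tuning $c$ gives exactly the exponential term in the statement.

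For the unbounded part I would use Markov's inequality with the $p$-th moment followed by Rosenthal's inequality for sums of centered i.i.d.\ random variables:
\[
\E\left|\sum_{k=1}^n X_k^{(u)}\right|^p \leq C_p \left(n \E|X_1^{(u)}|^p + \left(n \E[(X_1^{(u)})^2]\right)^{p/2}\right).
\]
Both $\E|X_1^{(u)}|^p$ and $\E[(X_1^{(u)})^2]$ are bounded in terms of $\E|X_1|^p$ (the second via the tail bound on the $L^2$ norm of the truncation remainder). Dividing by $(nt/2)^p$ produces two contributions of orders $n^{1-p}t^{-p}$ and $n^{-p/2}t^{-p}$; for $p \geq 2$ the first is absorbed into the second, yielding the bound $C_p t^{-p} n^{-p/2}$.

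The main obstacle is securing the sub-Gaussian rate $n^{-p/2}$ in the polynomial term. A naive union bound on the event $\{\exists k : |X_k| > M\}$ would give $n \P[|X_1| > M]$, which is too weak to produce $n^{-p/2}$. Rosenthal's inequality (or Marcinkiewicz--Zygmund) resolves this by leveraging variance information on the truncated remainder, at the cost of folding the distributional moments of $X_1$ into the constant $C_p$.
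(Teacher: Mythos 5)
Your argument is correct in substance but takes a genuinely different route from the paper: the paper disposes of the lemma in one line by quoting a ready-made Fuk--Nagaev-type inequality (Theorem 2.1 of the cited work of Chesneau) applied to $M_i = X_i/n$, whereas you reprove such an inequality from scratch by truncating at $M \asymp \Var[X_1]/t$, applying Bernstein to the bounded recentered part and Markov plus Rosenthal to the tail part. The details check out: $|X_k^{(b)}|\le 2M$ and $\Var[X_k^{(b)}]\le\Var[X_1]$ give the exponential term, while in the Rosenthal step the two contributions are of order $n^{1-p}t^{-p}\E|X_1|^p$ and $n^{-p/2}t^{-p}(\E[(X_1^{(u)})^2])^{p/2}$, and since $n^{1-p}\le n^{-p/2}$ for $p\ge 2$ and $\E[(X_1^{(u)})^2]\le\Var[X_1]$, both are absorbed into $C_pt^{-p}n^{-p/2}$ with $C_p$ depending on the law of $X_1$ -- which is unavoidable by scaling and is what the statement intends. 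Your route has the minor advantage of being self-contained and of covering $p=2$ (the quoted theorem is stated for $p>2$), though there Chebyshev alone suffices. One caveat: ``tuning $c$ gives exactly the exponential term'' overstates what Bernstein delivers -- after the $t/2$ split the exponent is $-c_1\,nt^2/\Var[X_1]$ for some absolute $c_1<1/2$, never $1$. This is harmless: writing $u=nt^2/\Var[X_1]$, the deficit $2e^{-c_1u}$ is at most a constant (depending on $p$ and $\Var[X_1]$) times $u^{-p/2}=\Var[X_1]^{p/2}t^{-p}n^{-p/2}$, so it can be folded into the polynomial term; and the paper's own citation only produces the constant $1/16$ in the exponent, so the lemma's stated constant is not literal in either proof and is immaterial in its application, where only summability in $n$ is used.
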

\begin{proof}
	This is a straightforward application of \cite[Theorem
	$2.1$]{chesneau} which states that for independent random variables
	$M_i$ with $\E[M_i] = 0$ and $\E[|M_i|^p] < \infty$ for some $p > 2$
	we have
	\[
	\P\left[\left|\sum_{i=1}^n M_i\right| \geq t \right] \leq C_p
	t^{-p}\max\left(r_{n,p}(t), (r_{n,2}(t))^{p/2}\right) + \exp\left
	(- \frac{t^2}{16 b_n}\right)
	\]
	where $r_{n,u}(t) = \sum_{i = 1}^n \E(|M_i|^u \one_{|M_i| \geq
		3b_n/t}),$ $b_n = \sum_{i = 1}^n \E[M_i^2]$ and $C_p$ is a positive
	constant. Setting $M_i = X_i/n$ completes the proof.
\end{proof}

For a fixed tree and $m < n$, define $B_m(n)$ to be the event that
$\rtt\conn\TT_n$ through precisely one vertex at depth $m$.

\begin{proposition} \label{pr:spread}
	Suppose $\E[Z^p] < \infty$ for all $p \geq1$. There exists an $N =
	N(\TT)$ with $N < \infty$ almost surely so that for all $n \geq N$,
	we have
	\begin{enumerate}
		\item[$(a)$] $\P_\TT[B_m(n)^c \| \rtt\conn\TT_n] < Cn^{-1/4}$ for
		$m = m(n) := \lceil\frac{\log n}{4 \log\mu}\rceil$
		\item[$(b)$] $\max_{v \in\TT_n} \P_\TT[v \in Y_n \| \rtt\conn
		\TT_n] = O(n^{-1/8})$
	\end{enumerate}
	for some constant $C > 0$.
\end{proposition}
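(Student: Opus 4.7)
Part (b) follows almost immediately from Theorem \ref{thm:surv-prob}: for any $v \in \TT_n$, the event $\{v \in Y_n\}$ requires every edge on the length-$n$ path from $\rtt$ to $v$ to be open, so $\P_\TT[v \in Y_n] = p_c^n$. Since $\{v \in Y_n\} \subseteq \{\rtt \conn \TT_n\}$, Theorem \ref{thm:surv-prob} gives uniformly in $v$ that
\[
\P_\TT[v \in Y_n \| \rtt \conn \TT_n] = \frac{p_c^n}{\P_\TT[\rtt \conn \TT_n]} \sim \frac{n \mu^{-n}}{W\lambda} \text{ a.s.,}
\]
which decays exponentially, comfortably within the claimed $O(n^{-1/8})$.

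For part (a), set $N_{m,n} := |\{u \in \TT_m : \rtt \conn u \conn \TT_n\}|$; then $B_m(n)^c \cap \{\rtt \conn \TT_n\} = \{N_{m,n} \geq 2\}$, and Markov's inequality applied to $N_{m,n}(N_{m,n}-1)$ gives $\P_\TT[N_{m,n} \geq 2] \leq \tfrac{1}{2}\E_\TT[N_{m,n}(N_{m,n}-1)]$. Expanding this factorial second moment over ordered pairs $u \neq v \in \TT_m$ with most recent common ancestor at depth $k = k(u,v)$, the paths $\rtt \to u$ and $\rtt \to v$ share exactly their first $k$ edges while the two subtree-survival events depend only on edges inside $\TT(u)$ and $\TT(v)$, giving
\[
\E_\TT[N_{m,n}(N_{m,n}-1)] = \sum_{u \neq v \in \TT_m} p_c^{2m-k(u,v)} \, \P_{\TT(u)}[\rtt \conn \TT_{n-m}(u)] \, \P_{\TT(v)}[\rtt \conn \TT_{n-m}(v)].
\]
Bounding each subtree survival probability via Lemma \ref{lem:prop-sandwich} and using the trivial estimate $p_c^{2m-k} \leq p_c^m$ yields
\[
\E_\TT[N_{m,n}(N_{m,n}-1)] \leq \frac{4 p_c^m}{(1-p_c)^2(n-m)^2}\left(\sum_{u \in \TT_m} \overline{W}(u)\right)^2.
\]

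Conditionally on $\T_m$, the branching property of Galton-Watson trees makes $\{\overline{W}(u)\}_{u \in \TT_m}$ i.i.d.\ copies of $\overline{W}$, and Doob's $L^2$ maximal inequality (applicable since $\E[Z^2] < \infty$) gives $\E[\overline{W}^2] < \infty$. A conditional Chebyshev estimate combined with $Z_m \geq c\mu^m$ a.s.\ and Borel-Cantelli then yields $\sum_{u \in \TT_m} \overline{W}(u) = O(Z_m) = O(W\mu^m)$ almost surely, so using $p_c^m (W\mu^m)^2 = W^2 \mu^m$ we obtain $\E_\TT[N_{m,n}(N_{m,n}-1)] = O(\mu^m/(n-m)^2)$ a.s. With $m = \lceil \log n/(4\log\mu) \rceil$ we have $\mu^m \leq \mu n^{1/4}$, so the right-hand side is $O(n^{-7/4})$. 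Dividing by $\P_\TT[\rtt \conn \TT_n] \geq W\lambda/(2n)$ (from Theorem \ref{thm:surv-prob} combined with $W > 0$ a.s.) then gives $\P_\TT[B_m(n)^c \| \rtt \conn \TT_n] = O(n^{-3/4})$ almost surely, which is strictly stronger than the claimed $O(n^{-1/4})$.

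The main obstacle is the quenched control of the random factor $\sum_{u \in \TT_m} \overline{W}(u)$: an annealed computation would directly yield a bound of order $m/(n-m)^2$ in expectation, but the almost-sure statement requires exploiting the conditional independence of the subtree weights along with a concentration estimate. The crude step $p_c^{2m-k}\leq p_c^m$ sidesteps a more delicate decomposition over LCA depths, and the choice $m \sim \log n/(4\log\mu)$ is calibrated so that $\mu^m \sim n^{1/4}$, leaving enough room to absorb the extra factor of $n$ incurred by dividing by $\P_\TT[\rtt\conn\TT_n]$.
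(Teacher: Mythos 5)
Part (b) of your proposal is correct and is a genuinely different, much shorter route than the paper's: since $v\in\TT_n$ lies in $Y_n$ exactly when the $n$ edges of the root-to-$v$ path are open, $\P_\TT[v\in Y_n]=p_c^n$ for every $v$, and dividing by $\P_\TT[\rtt\conn\TT_n]\sim \lambda W/n$ (Theorem \ref{thm:surv-prob}, with $W>0$ a.s.\ because $Z\geq 1$) gives an exponentially small bound, uniform in $v$ — far stronger than the stated $O(n^{-1/8})$. The paper instead deduces (b) from (a) via a comparison of the conditional measures given $B_m(n)$; your observation bypasses that entirely.

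For part (a) your skeleton is essentially the paper's (second moment over pairs of depth-$m$ vertices, Lemma \ref{lem:prop-sandwich} for the subtree survival probabilities, $\mu^{m}\asymp n^{1/4}$, division by $\P_\TT[\rtt\conn\TT_n]$ via Theorem \ref{thm:surv-prob}), and retaining the factor $p_c^{2m-k}\leq p_c^{m}$ is a genuine improvement over the paper's cruder bound $\P_\TT[\rtt\conn(u,v)]\leq 1$. The gap is in the one step that actually makes the statement quenched: the claim that ``a conditional Chebyshev estimate plus Borel--Cantelli'' yields $\sum_{u\in\TT_m}\overline{W}(u)=O(Z_m)$ almost surely along $m=m(n)$. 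Chebyshev conditional on $\T_m$ gives a deviation probability of order $\Var(\overline{W})/Z_{m(n)}\asymp n^{-1/4}$, which is \emph{not} summable in $n$, so Borel--Cantelli cannot be invoked as stated. Nor can you simply sum over the distinct values of $m$ instead: the events are not $\T_m$-measurable (each $\overline{W}(u)$ depends on the entire subtree below $u$), so the conditional (L\'evy) Borel--Cantelli lemma does not apply directly, and an unconditional bound would require control of $\E[1/Z_m]$, i.e.\ lower-tail/harmonic-moment estimates that your sketch does not supply. This is precisely why the paper invokes Lemma \ref{lem:sum-tails} with $p=9$ and the generous threshold $n^{1/8}$ together with the trivial bound $Z_m\geq 1$: the resulting error probabilities are $O(n^{-9/8})$, summable in $n$, at the cost of allowing the empirical average of $\overline{W}(u)$ to be as large as $n^{1/8}$ — which is exactly where the exponent $1/4$ in the proposition comes from, rather than the $3/4$ you claim. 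Your argument is repairable under the standing hypothesis (all moments of $Z$ finite) by replacing Chebyshev with a conditional Rosenthal/Fuk--Nagaev bound of order $p>8$, or simply by adopting the paper's growing threshold, which still delivers the claimed $O(n^{-1/4})$; as written, however, the concentration step does not follow.
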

\emph{Proof.} Note first that for the choice of $m$ as in part $(a)$,
we have $ \frac{1}{2\mu} W n^{1/4} \leq Z_m \leq2 \mu W n^{1/4}$ for
sufficiently large $n$.

$(a)$ Using Theorem \ref{thm:surv-prob} and Lemma \ref
{lem:prop-sandwich}, we bound
\begin{align}
\P_\TT[B_m(n)^c \| \rtt\conn\TT_n] &\leq\frac{\left(\sum_{v
		\in T_m} \P_\TT[v \conn\TT_n] \right)^2}{\P_\TT[\rtt\conn\TT
	_n]} \nonumber\\
&\leq\left(\frac{2}{1 - p_c}\right)^2 \left(\frac{\sum_{v \in
		\TT_m} \overline{W}(v)}{Z_m} \right)^2 \frac{Z_m^2}{(n - m)^2 \P
	_\TT[\rtt\conn\TT_n]} \nonumber\\
&\leq C \left(\frac{\sum_{v \in\TT_m} \overline{W}(v)}{Z_m}
\right)^2 W n^{-1/2} \label{eq:B-n-complement-bound}
\end{align}
for $n$ sufficiently large, and some choice of $C > 0$ depending on the
distribution of $Z$. Applying Lemma \ref{lem:sum-tails} for $p = 9$
gives
\[
\P\left[ \left|\frac{\sum_{v \in\TT_m} \overline{W}(v)}{Z_m} -
\E[\overline{W}] \right| > n^{1/8} \right] \leq C_9 n^{-9/8} + 2
\exp\left(- n^{1/4} / \Var[\overline{W}] \right)
\]
where we use the trivial bound of $1 \leq Z_m.$ Since this is summable
in $n$, the Borel-Cantelli Lemma implies that this event only occurs
finitely often. In particular, this means that for sufficiently large
$n$
\begin{equation} \label{eq:B_m-bound}
\P_\TT[B_m(n)^c \, | \, \rtt\conn\TT_n] \leq C W n^{-1/4}
\end{equation}
for some constant $C > 0$ depending only on the distribution of $Z$.

$(b)$ Applying Lemma \ref{lem:condition} to the measure $\P_\TT[
\cdot\| \rtt\conn\TT_n]$ and recalling $B_m(n) \subseteq\rtt\conn
\TT_n$,
\begin{align*}
\Big|\P_\TT[v \in Y_n \, | \,\rtt\conn\TT_n] - \P_\TT[v \in Y_n
\| B_m(n)] \Big| &\leq\P_\TT\left[B_m(n)^c \| \rtt\conn\TT_n
\right]
\end{align*}
which is $O(n^{-1/4})$ by part $(a)$. It is thus sufficient to bound
$\P_\TT[v \in Y_n \| B_m(n)]$.
For a vertex $v \in\TT_n$ and $m < n$, let $P_m(v)$ be the ancestor
of $v$ in $\TT_m$. We then have
\[
\P_\TT[v \in Y_n \| B_m(n)] \leq\P_\TT[\rtt\conn P_m(v) \conn\TT
_n \| B_m(n)]\,.
\]
Conditioned on $B_m(n)$, there exists a unique vertex $w \in\TT_m$ so
that $\rtt\conn w \conn\TT_n$; this vertex $w$ is chosen with
probability bounded above by
\begin{align}
\P_\TT[\rtt&\conn w \conn\TT_n \| B_m(n)] \nonumber\\
&\leq\frac{\P_\TT[\rtt\conn w \conn\TT_n]}{\sum_{u \in\TT_m}
	\P_\TT[\rtt\conn u \conn\TT_n] - \sum_{(u_1,u_2) \in\binom{\TT
			_m}{2}} \P_\TT[\rtt\conn(u_1,u_2) \conn\TT_n] } \nonumber\\
&\leq\frac{\P_\TT[w \conn\TT_n]}{ \sum_{u \in\TT_m} \P_\TT[u
	\conn\TT_n] - \left(\sum_{u \in\TT_m}\P_\TT[u \conn\TT_n]
	\right)^2 } \nonumber\\
&\leq\frac{c (n - m)^{-1}\overline{W}(w)}{(1 + o(1))\sum_{u \in\TT
		_m} \P_\TT[u \conn\TT_n]} \label{eq:point-B_m-bound}
\end{align}

where the latter inequality is by applying the bound of Lemma \ref
{lem:prop-sandwich} to the numerator and arguing as in \eqref
{eq:B-n-complement-bound} to almost-surely bound the denominator. In
particular, the $o(1)$ term is uniform in $w$.

We want to take the maximum over all possible $w \in\TT_m$, and note
that for any $\alpha> 0$,
\begin{align*}
\P\left[\max_{w \in\TT_m} \overline{W}(w) > n^{\alpha}\right]
&=\E\left[\P\left[\max_{w \in\TT_m} \overline{W}(w) > n^{\alpha
}\, \big|\, \T_m \right]\right] \\
&\leq \E[Z_m] \P[\overline{W}> n^\alpha] \\
&\leq\mu^m \cdot\frac{\E[\overline{W}^{2/\alpha}]}{n^{2}} \\
&= O(n^{-7/4})
\end{align*}
which is summable, implying that for any fixed $\alpha\!>\!0$, we
eventually have $\max_{w \in\TT_m} \overline{W}(w)\!\leq\!n^{\alpha
}.$ It merely remains to bound the denominator of \eqref{eq:point-B_m-bound}.

Note that by Proposition \ref{pr:factorial-moments}, the lower bound
given in Lemma \ref{lem:prop-sandwich} converges almost surely to
$\frac{W \lambda}{2}$ as $n \to\infty$. In particular, this means
that if we set
\[
p_n := \P\left[\frac{W \lambda}{4} \leq n\P_\TT[|Y_n| > 0] \right],
\]
then $p_n \to1$. By Hoeffding's inequality together with
Borel-Cantelli, the number of vertices $u \in\TT_m$ for which we have
\[
\frac{W(u) \lambda}{4} \leq(n - m) \P_\TT[u \conn\TT_n]
\]
is almost surely at least $1/2$ of $\TT_m$ for $n$ sufficiently large.
This gives
\[
(n-m)\sum_{u \in\TT_m} \P_\TT[u \conn\TT_n] \geq\frac{\lambda
}{4}\sum_{u \in\TT_m} W(u) \one_{W(u)\lambda/4 \leq(n - m)\P_\TT
	[u \conn\TT_n] } = \Omega(Z_m) \,.
\]

Recalling that $Z_m = \Theta(Wn^{-1/4})$ and plugging the above into
\eqref{eq:point-B_m-bound} completes the proof. $\Cox$

\subsection{Incipient infinite cluster}

As in \cite{kesten-IIC}, we sketch a proof of the construction of the
IIC. For an infinite tree $T$, define $T{[n]}$ to be the finite subtree
of $T$ obtained by restricting to vertices of depth at most $n$.

\begin{lemma}\label{lem:constr}
	Suppose $\E[Z^4] < \infty$; for a subtree $t$ of $\TT{[n]}$, we have
	\[
	\lim_{M \to\infty} \P_\TT[\mathcal{C}_{p_c}[n] = t \| \rtt\conn
	\TT_M ] = \frac{\sum_{v \in t_n}W(v)}{W}\P_\TT[\mathcal
	{C}_{p_c}[n] = t ]
	\]
	almost surely for each tree $t$.
	
	The random measure $\mu_\TT$ on subtrees of $\TT$ with marginals
	\[
	\mu_\TT\big|_{\T_n}[t] := \frac{\sum_{v \in t_n}W(v)}{W}\P_\TT
	[\mathcal{C}_{p_c}[n] = t ]
	\]
	has a unique extension to a probability measure on rooted infinite
	trees $\GW$ almost surely. The IIC is thus the random subtree of $\TT
	$ with law $\mu_\TT$.
\end{lemma}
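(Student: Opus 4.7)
The plan is to establish the limit formula first by a Bonferroni argument combined with Theorem \ref{thm:surv-prob} applied to descendant subtrees, then verify that the stated marginals form a consistent family of probability measures via the recursion $W(u) = \mu^{-1}\sum_{c}W(c)$ for the limit martingale, and finally invoke Kolmogorov's extension theorem. Throughout, I would restrict to a $\GW$-full measure event on which the conclusions of Theorem \ref{thm:surv-prob} hold for the descendant subtree $\TT(v)$ of every vertex $v \in \TT$ simultaneously; such an event exists because the exceptional set is a countable union of $\GW$-null sets indexed by the vertices of $\TT$.

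Fix a finite subtree $t$ of $\TT[n]$ compatible with $\TT$. Conditional on $\{\mathcal{C}_{p_c}[n] = t\}$, the event $\{\rtt \conn \TT_M\}$ equals $\bigcup_{v \in t_n}\{v \conn \TT_{M-n}(v)\}$, and the events in this union are independent because they depend on edges in disjoint descendant subtrees. Bonferroni sandwiches this union probability between $S_1 := \sum_{v \in t_n}\P_\TT[v \conn \TT_{M-n}(v)]$ and $S_1 - S_2$, where $S_2 := \sum_{\{u,v\} \subseteq t_n}\P_\TT[u \conn \TT_{M-n}(u)]\P_\TT[v \conn \TT_{M-n}(v)]$. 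Applying Theorem \ref{thm:surv-prob} to each $\TT(v)$ gives $(M-n)\P_\TT[v \conn \TT_{M-n}(v)] \to W(v)\lambda$, so $M\cdot S_1 \to \lambda \sum_{v \in t_n} W(v)$ while $M\cdot S_2 = O(M^{-1})$. Dividing by $M\cdot\P_\TT[\rtt \conn \TT_M] \to W\lambda$ (also Theorem \ref{thm:surv-prob}) and multiplying by $\P_\TT[\mathcal{C}_{p_c}[n] = t]$ yields the stated limit.

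For consistency, fix a subtree $t'$ of $\TT[n-1]$. Conditional on $\{\mathcal{C}_{p_c}[n-1] = t'\}$, the configuration at depth $n$ is obtained by independently opening, with probability $p_c$, each edge from a vertex $u \in t'_{n-1}$ to one of its $\TT$-children, so
\[
\sum_{t:\,t[n-1]=t'}\sum_{v \in t_n}W(v)\,\P_\TT[\mathcal{C}_{p_c}[n] = t] = \P_\TT[\mathcal{C}_{p_c}[n-1] = t']\cdot p_c \sum_{u \in t'_{n-1}}\sum_{c\,\text{child of}\,u}\!W(c).
\]
Passing $k \to \infty$ in $W_k(u) = \mu^{-1}\sum_c W_{k-1}(c)$ yields the recursion $W(u) = \mu^{-1}\sum_c W(c)$; combined with $p_c \mu = 1$, the inner double sum collapses to $\sum_{u \in t'_{n-1}}W(u)$, matching $W \cdot \mu_\TT|_{\T_{n-1}}[t']$. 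Iterating the same recursion gives $\sum_{v \in \TT_n} W(v) = \mu^n W$, whence $\sum_t \mu_\TT|_{\T_n}[t] = W^{-1}\E_\TT[\sum_{v \in Y_n}W(v)] = W^{-1} p_c^n \mu^n W = 1$, confirming that each marginal is a probability measure.

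With a consistent family of probability marginals on the filtration $(\T_n)$, Kolmogorov's extension theorem produces a unique Borel probability measure $\mu_\TT$ on the space of rooted subtrees of $\TT$ realizing these marginals. Since every $\mu_\TT|_{\T_n}$ is supported on subtrees reaching depth $n$, the extension is supported on infinite rooted subtrees. The main technical obstacle is the Bonferroni step, whose principal subtlety is ensuring the convergence from Theorem \ref{thm:surv-prob} holds uniformly across the countably many descendant subtrees on a single full-$\GW$-measure event; once this is in place, everything downstream is essentially bookkeeping with the recursive structure of $W$.
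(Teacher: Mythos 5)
Your proof is correct and follows essentially the same route as the paper: a Bonferroni/inclusion--exclusion estimate on $\{\rtt\conn\TT_M\}$ decomposed over $t_n$, Theorem \ref{thm:surv-prob} applied simultaneously to all (countably many) descendant subtrees, and then consistency of the marginals via the recursion $W(u)=p_c\sum_c W(c)$ followed by Kolmogorov extension. Your consistency and normalization check is merely a more explicit version of the paper's one-line remark, so there is nothing substantive to change.
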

\begin{proof}
	Since each $\TT$ has countably many vertices, Theorem \ref
	{thm:surv-prob} assures that $n\P_\TT[v \conn\TT_{n + |v|}] =
	\lambda W(v)$ for each vertex $v$ of $\TT$ a.s. When all of these
	limits hold, we then have
	\begin{align*}
	\P_\TT[\mathcal{C}_{p_c}[n] = t \| \rtt\conn\TT_M] &= \frac{ \P
		_\TT[\mathcal{C}_{p_c}[n] = t, \rtt\conn\TT_M]}{\P_\TT[\rtt
		\conn\TT_M]} \\
	&= \P_\TT[\mathcal{C}_{p_c}[n] = t]\left(\frac{\sum_{v \in t_n}
		\P_\TT[v \conn\TT_M] + O(|t_n|^2 M^{-2})}{\P_\TT[\rtt\conn\TT
		_M]} \right) \\
	&\xrightarrow{M\to\infty} \P_\TT[\mathcal{C}_{p_c}[n] = t]\frac
	{\sum_{v \in t_n} W(v)}{W}
	\end{align*}
	for each $t$. To show that the measure $\mu_\TT$ can be extended, we
	note that its marginals are consistent, as can be seen via the
	recurrence $W(v) = p_c \sum_{w} W(w)$ where the sum is over all
	children of $v$. Applying the Kolmogorov extension theorem \cite
	[Theorem $2.1.14$]{durrett4} completes the proof.
\end{proof}

It is easy to show that the law of the IIC can in fact be generated by
conditioning on $p > p_c$ percolation to survive and then taking $p \to
p_c^+$:
\begin{corollary} For a subtree $t$ of $\TT[n]$, we have
	\[
	\lim_{p \to p_c^+} \P_\TT[\mathcal{C}_p[n] = t \| |\mathcal{C}_p|
	= \infty] = \frac{\sum_{v \in t_n}W(v) }{W} \P_\TT[\mathcal
	{C}_{p_c}[n] = t]
	\]
	almost surely.
\end{corollary}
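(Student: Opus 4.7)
The plan is to mirror the proof of Lemma \ref{lem:constr}, replacing the conditioning event $\{\rtt \conn \TT_M\}$ with $\{|\mathcal{C}_p| = \infty\}$ and the limit $M \to \infty$ with $p \to p_c^+$. Given the cluster shape $\mathcal{C}_p[n] = t$, the percolation on edges below depth $n$ is independent of the edges in $\TT[n]$, and the clusters extending from the distinct vertices $v \in t_n$ live in disjoint descendant subtrees $\TT(v)$. Bayes' rule together with this conditional independence therefore gives
\[
\P_\TT[\mathcal{C}_p[n] = t \| |\mathcal{C}_p| = \infty] = \frac{\P_\TT[\mathcal{C}_p[n] = t]}{\theta_\TT(p)} \left(1 - \prod_{v \in t_n}\bigl(1 - \theta_{\TT(v)}(p)\bigr)\right).
\]

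Two of the three factors are easy to handle. The probability $\P_\TT[\mathcal{C}_p[n] = t]$ is a polynomial in $p$ (only finitely many edges of $\TT[n]$ enter it), so it is continuous and tends to $\P_\TT[\mathcal{C}_{p_c}[n] = t]$ as $p \to p_c^+$. Since critical percolation does not survive \cite{lyons90}, we have $\theta_{\TT(v)}(p) \to 0$ for each fixed $v$, so a Taylor expansion yields
\[
1 - \prod_{v \in t_n}\bigl(1 - \theta_{\TT(v)}(p)\bigr) = \sum_{v \in t_n} \theta_{\TT(v)}(p) + O\!\left(\max_{v \in t_n} \theta_{\TT(v)}(p)^2\right).
\]
After dividing by $\theta_\TT(p)$, the whole claim reduces to the quenched near-critical ratio
\[
\lim_{p \to p_c^+} \frac{\theta_{\TT(v)}(p)}{\theta_\TT(p)} = \frac{W(v)}{W}, \quad \GW\text{-a.s.\ for every vertex } v \text{ of } \TT.
\]

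The main obstacle is exactly this ratio: a quenched near-critical survival statement. I would attack it by establishing the stronger claim that $\theta_\TT(p)/(p - p_c) \to c\,W$ almost surely as $p \to p_c^+$ for a deterministic constant $c = c(Z) > 0$ depending only on the offspring distribution. The strategy parallels the proof of Theorem \ref{thm:surv-prob}: pick a cut-level $m = m(p) \to \infty$ with $\mu^m$ of order $1/(p - p_c)$, apply the Bonferroni inequalities to $\{\rtt \conn \infty\}$ decomposed over $v \in \TT_m$, exploit the conditional independence of $\{v \conn \infty\}$ across $v \in \TT_m$, invoke the annealed near-critical Kolmogorov estimate (a standard consequence of applying the Theorem \ref{thm:annealed}-style analysis to the $\Bin(Z,p)$-Galton-Watson tree, whose mean $p\mu \to 1^+$), and bound the pairwise-overlap error via the second-moment estimate from Proposition \ref{pr:factorial-moments}. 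Since each descendant subtree $\TT(v)$ is itself a Galton-Watson tree with the same offspring distribution and its own limit variable $W(v)$, the same scaling holds for $\theta_{\TT(v)}(p)$ with $W$ replaced by $W(v)$, so the ratio is $W(v)/W$ and the proof concludes.
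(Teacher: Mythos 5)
Your Bayes/independence reduction is exactly the paper's route: conditioning on $\mathcal{C}_p[n]=t$, survival is governed by the independent events that some $v\in t_n$ percolates to infinity inside its descendant tree $\TT(v)$, and after expanding the product everything reduces to the quenched near-critical ratio $\theta_{\TT(v)}(p)/\theta_\TT(p)\to W(v)/W$. The divergence is in how that ratio is obtained. The paper simply cites \cite{mpr-quenched} for the asymptotic $\theta_\TT(p)/(p-p_c)\to KW$ almost surely (applied to each of the countably many descendant trees, which immediately gives the ratio), whereas you propose to prove this asymptotic from scratch by mimicking the proof of Theorem \ref{thm:surv-prob}. That asymptotic is precisely the main theorem of \cite{mpr-quenched}, not a routine corollary of the methods in this paper, so as written your proposal replaces a citation with a sketch of a genuinely substantial result; the corollary is not yet proved by the sketch alone.

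Concretely, the sketch leaves three pieces of real work undone. First, to control both the Bonferroni pair term and the variance of the level-$m$ sum you need a near-critical analogue of the upper bound in Lemma \ref{lem:prop-sandwich}, namely $\theta_{\TT(v)}(p)\le C\,\overline{W}(v)\,(p-p_c)$ uniformly for $p$ near $p_c$; the paper's bound is stated only at $p_c$ and at finite depth (it can be extracted from the same resistance estimate by taking depth of order $(p-p_c)^{-1}$, but this must be carried out). Second, your Chebyshev plus Borel--Cantelli argument yields almost sure convergence only along a countable sequence $p_k\downarrow p_c$, while the statement is a continuous limit $p\to p_c^+$; you need to interpolate, e.g. using monotonicity of $p\mapsto\theta_\TT(p)$ together with a sequence satisfying $(p_{k+1}-p_c)/(p_k-p_c)\to 1$. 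Third, the annealed input is not Theorem \ref{thm:annealed} itself but a near-critical survival asymptotic for the varying offspring laws $\Bin(Z,p)$ with mean $p\mu\to 1^+$, which is classical but must be invoked with uniformity in $p$; relatedly, the cut level must satisfy both $m\to\infty$ and $m=o\big((p-p_c)^{-1}\big)$ so that $(p\mu)^m\to 1$, a constraint you should make explicit. None of these obstacles appears fatal---the target statement is true and is exactly what the paper quotes---and if you are willing to cite \cite{mpr-quenched} for it, the remainder of your argument (continuity of the polynomial $p\mapsto\P_\TT[\mathcal{C}_p[n]=t]$, the product expansion, and the negligibility of the quadratic error after dividing by $\theta_\TT(p)$) is correct and coincides with the argument the paper indicates via Lemma \ref{lem:constr}.
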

\begin{proof}
	As shown in \cite{mpr-quenched}, we have
	\[
	\lim_{p \to p_c} \frac{\P_\TT[|\mathcal{C}_p| = \infty]}{p - p_c}
	= K W
	\]
	almost-surely for some constant $K$ depending only on the offspring
	distribution. The Corollary follows from Bayes' theorem in the same
	manner as Lemma \ref{lem:constr}.
\end{proof}

In light of Lemma \ref{lem:constr}, it is natural to guess that the
number of vertices in the IIC at depth $n$ will asymptotically be the
size-biased version of $(|Y_n| \| \rtt\conn\TT_n )$: the sum $\sum
_{v \in t_n} W(v)$ will be relatively close to $|t_n| W$, therefore
biasing each choice of $t$ by a factor of $|t_n|$. In order to make
this argument rigorous, we will invoke Proposition \ref{pr:spread}
which shows that no single vertex has high probably of surviving
conditionally. Throughout, we use the notation $n(a,b) = (na,nb)$ for
$a < b$ and $\mathbf{C}$ to denote the IIC.

\begin{thm} \label{thm:IIC}
	Suppose $\E[Z^p] < \infty$ for each $1 \leq p < \infty$. Then for
	each $0 \leq a < b$,
	\[
	\lim_{n \to\infty} \P_\TT[\mathbf{C}_n \in n(a,b)] = \int_{a}^b
	\lambda^2 x e^{-\lambda x} \,dx
	\]
	almost surely. In fact, $\mathbf{C}_n / n$ converges in distribution
	to the random variable with density $\lambda^2 x e^{-\lambda x}$ for
	$\GW$-almost every $\TT$.
\end{thm}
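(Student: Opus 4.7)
The plan is to apply the method of moments to the IIC and reduce the problem to the factorial-moment asymptotics of Proposition \ref{pr:factorial-moments}. Writing $\mu_\TT$ for the IIC law on subtrees from Lemma \ref{lem:constr} and $Y_n$ for the depth-$n$ layer of the ordinary $p_c$-percolation cluster on $\TT$, Lemma \ref{lem:constr} yields
\[
\int f(|t_n|)\, d\mu_\TT(t) \;=\; \E_\TT\!\left[\frac{\sum_{v \in Y_n} W(v)}{W}\, f(|Y_n|)\right]
\]
for any bounded measurable $f$; taking $f(x) = (x/n)^k$ gives the key moment identity
\[
\int (|t_n|/n)^k\, d\mu_\TT(t) \;=\; \frac{1}{n^k W}\,\E_\TT\!\left[|Y_n|^k \sum_{v \in Y_n} W(v)\right].
\]
The target density $\lambda^2 x e^{-\lambda x}$ is Gamma$(2,\lambda)$, has $k$th moment $(k+1)!/\lambda^k$, and possesses a moment generating function finite in a neighborhood of zero, so it is determined by its moments. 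By the method of moments it suffices to show, for each fixed $k$, that the left-hand side above converges to $(k+1)!/\lambda^k$ almost surely; a union bound over $k \in \N$ then yields a.s.\ convergence in distribution of $\mathbf{C}_n/n$, and since the limit density is continuous, every $(a,b)$ is a continuity set, giving the pointwise convergence statement.

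The main computation is the split
\[
\E_\TT\!\left[|Y_n|^k \sum_{v \in Y_n} W(v)\right] \;=\; \E_\TT[|Y_n|^{k+1}] \;+\; D_n, \qquad D_n := \sum_{v_0,\ldots,v_k \in \TT_n} (W(v_0) - 1)\, \P_\TT[v_0,\ldots,v_k \in Y_n].
\]
Proposition \ref{pr:factorial-moments} gives $\E_\TT[|Y_n|^{k+1}]/(n^k W) \to (k+1)!/\lambda^k$ almost surely, reducing the whole task to $D_n = o(n^k)$ a.s. The decisive structural fact is that, conditional on $\T_n$, the subtrees $\{\TT(v_0): v_0 \in \TT_n\}$ are i.i.d.\ Galton-Watson, so $\{W(v_0) - 1\}_{v_0 \in \TT_n}$ are i.i.d.\ mean-zero given $\T_n$, whereas the weights
\[
a_{v_0}(\TT) \;:=\; \sum_{v_1,\ldots,v_k \in \TT_n} \P_\TT[v_0,\ldots,v_k \in Y_n] \;=\; \E_\TT\!\left[|Y_n|^k \one_{v_0 \in Y_n}\right]
\]
are $\T_n$-measurable. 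Expanding $D_n^2$ and dropping off-diagonal terms yields $\E[D_n^2] = \Var[W]\cdot \E\!\left[\sum_{v_0 \in \TT_n} a_{v_0}(\TT)^2\right]$. Cauchy-Schwarz under $\P_\TT$ gives $a_{v_0}(\TT)^2 \leq \E_\TT[|Y_n|^{2k}]\, p_c^n\, \one_{v_0 \in \TT_n}$, so $\sum_{v_0 \in \TT_n} a_{v_0}(\TT)^2 \leq \E_\TT[|Y_n|^{2k}]\,W_n$, and Proposition \ref{pr:factorial-moments} combined with $\sup_n \|W_n\|_{L^2} < \infty$ delivers $\E[D_n^2] \leq C_k\, n^{2k-1}$.

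The main obstacle is upgrading this $L^2$ bound to almost-sure convergence of $D_n/n^k$ to zero: Chebyshev and Borel-Cantelli yield $\P_\GW[|D_n| > n^k\epsilon] \leq C/(n\epsilon^2)$, which just misses summability. I would resolve this by running the same argument at higher order via Rosenthal's inequality. Conditional on $\T_n$, $D_n$ is a sum of independent mean-zero random variables each in $L^p$ for every $p$; since $\max_{v_0 \in \TT_n} a_{v_0}(\TT) \leq p_c^{n/2}\sqrt{\E_\TT[|Y_n|^{2k}]}$ is exponentially small in $n$, the ``$\sum |a_{v_0}|^{2p}$'' part of Rosenthal decays exponentially, and the leading term gives $\E[D_n^{2p}] \leq C_{k,p}\, n^{p(2k-1)}$ for any fixed $p \geq 1$ --- provided one first establishes an $L^{2p}$ refinement of Proposition \ref{pr:factorial-moments}, which follows from the same martingale argument as Theorem \ref{thm:martingales} once all moments of $Z$ are finite. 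Taking $p = 2$ renders $\P_\GW[|D_n| > n^k\epsilon] \leq C/n^2$ summable, so Borel-Cantelli delivers $D_n/n^k \to 0$ almost surely, completing the proof.
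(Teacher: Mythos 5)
Your route is genuinely different from the paper's. The paper proves this theorem by a Bayes decomposition of $\P_\TT[|Y_n| \in n(a,b) \,|\, \rtt \conn \TT_{n+M}]$, feeding in Theorems \ref{thm:surv-prob} and \ref{thm:cond-surv} and then controlling the size-biasing factor $\sum_{v\in\TT_n}\P_\TT[v\in Y_n \,|\, |Y_n|\in n(a,b)](W(v)-1)/n$ via the point-probability bound of Proposition \ref{pr:spread} and a conditional Borel--Cantelli argument. You instead apply the method of moments directly to the size-biased law $\mu_\TT$ of Lemma \ref{lem:constr}; your moment identity, the split $\E_\TT[|Y_n|^k\sum_{v\in Y_n}W(v)] = \E_\TT[|Y_n|^{k+1}]+D_n$, and the structural observation that $a_{v_0}=\E_\TT[|Y_n|^k\one_{v_0\in Y_n}]$ is $\T_n$-measurable while $\{W(v_0)-1\}_{v_0\in\TT_n}$ are i.i.d.\ centered given $\T_n$ are all correct. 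This reduces the whole theorem to Proposition \ref{pr:factorial-moments} plus $D_n=o(n^k)$ a.s., bypassing Proposition \ref{pr:spread} and even Theorem \ref{thm:cond-surv} entirely, which is an attractive simplification; your closing bookkeeping (countably many $k$, moment-determinacy of the Gamma$(2,\lambda)$ limit, intervals as continuity sets) is also fine.

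The gap is at the decisive almost-sure step. Your $L^2$ estimate only yields $\P[|D_n|>\ee n^k]=O(1/n)$, as you note, and the Rosenthal repair you propose hinges on an $L^{2p}$ refinement of Proposition \ref{pr:factorial-moments}, equivalently $L^{2p}$ bounds on the increments of $M_n^{(k)}$. But Theorem \ref{thm:martingales} is imported from \cite{mpr-quenched} and provides only $L^2$ increment bounds; asserting that ``the same martingale argument'' gives the $L^{2p}$ version is an unverified strengthening of a black-box result, not a proof, so as written the argument is not closed. The irony is that the repair is much cheaper than Rosenthal, and you already have the key estimate in hand: you observe $\max_{v_0}a_{v_0}\leq p_c^{n/2}\,(\E_\TT[|Y_n|^{2k}])^{1/2}$ is exponentially small but use it only on the $\sum_{v_0} a_{v_0}^{2p}$ term. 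Use it on the variance itself: $\sum_{v_0}a_{v_0}^2\leq(\max_{v_0}a_{v_0})\sum_{v_0}a_{v_0}=(\max_{v_0}a_{v_0})\,\E_\TT[|Y_n|^{k+1}]\leq p_c^{n/2}(\E_\TT[|Y_n|^{2k}])^{1/2}\,\E_\TT[|Y_n|^{k+1}]$, so by Cauchy--Schwarz and the bounds already contained in Proposition \ref{pr:factorial-moments}, $\E[D_n^2]=\Var[W]\,\E[\sum_{v_0}a_{v_0}^2]\leq C_k\,p_c^{n/2}\,n^{2k}$, which decays exponentially. (Your cruder bound $\sum_{v_0}a_{v_0}^2\leq Z_np_c^n\E_\TT[|Y_n|^{2k}]=W_n\E_\TT[|Y_n|^{2k}]$ throws away exactly this exponential factor.) Plain Chebyshev and Borel--Cantelli then give $D_n/n^k\to0$ $\GW$-a.s., and with that substitution your proof is complete using only results established in the paper.
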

\begin{proof}
	To see that convergence in distribution follows from the almost sure
	limit, apply the almost sure limit to each interval $(a,b)$ with $a,b
	\in\mathbb{Q}$; since there are only countably many such intervals,
	there exists a set of full $\GW$ measure on which these limits
	simultaneously exist for each rational interval, thereby implying
	convergence in distribution \cite[Theorem 3.2.5]{durrett4}.
	
	We have
	\[
	\P_\TT[\mathbf{C}_n \in n(a,b)] = \lim_{M \to\infty} \P_\TT[Y_n
	\in n(a,b)\| \rtt\conn\TT_{n + M}]\,.
	\]
	
	For a fixed $n$, write
	\begin{align}
	\P_\TT[&|Y_n| \in n(a,b) \| \rtt\conn\TT_{n+M}] \nonumber\\ &=
	\frac{\P_\TT[\rtt\conn\TT_{n + M} \| |Y_n| \in n(a,b)]\cdot\P
		_\TT[|Y_n| \in n(a,b) \| \rtt\conn\TT_n] \cdot\P_\TT[\rtt\conn
		\TT_n]}{\P_\TT[\rtt\conn\TT_{n + M}]}\,. \label{eq:Y_n-size-cond}
	\end{align}
	
	We then calculate
	\begin{align*}
	\P_\TT&[\rtt\conn\TT_{n + M} \| |Y_n| \in n(a,b)]\\
	&= \sum_{S} \P_\TT[Y_n = S \| |Y_n| \in n(a,b) ] \P_\TT[S \conn\TT_{n + M}] \\
	&= \sum_{S} \P_\TT[Y_n = S \| |Y_n| \in n(a,b) ] \sum_{v \in S} \P
	_\TT[v \conn\TT_{n + M}] + O(M^{-2}) \\
	&= \sum_{v \in\TT_n} \P_\TT[v \in Y_n \| |Y_n| \in n(a,b)] \P_\TT
	[v \conn\TT_{n + M}] + O(M^{-2})\,.
	\end{align*}
	
	For a fixed $n$, we take $M \to\infty$ and utilize Theorem \ref
	{thm:surv-prob} to get
	\begin{equation} \label{eq:p-IIC}
	\lim_{M\to\infty} \frac{\P_\TT[\rtt\conn\TT_{n + M} \| |Y_n|
		\in n(a,b)]}{\P_\TT[\rtt\conn\TT_{n + M}]} = \frac{1}{W} \sum_{v
		\in\TT_n} \P_\TT[v \in Y_n \| |Y_n| \in n(a,b)]\cdot W(v)\,.
	\end{equation}
	We plug this into \eqref{eq:Y_n-size-cond} to get the limit
	\begin{align*}
	\lim_{M \to\infty} & \P_\TT[|Y_n| \in n(a,b) \| \rtt\conn\TT
	_{n+M}] \nonumber\\
	&= \left(\sum_{v \in\TT_n} \frac{ \P_\TT[v \in Y_n \| |Y_n| \in
		n(a,b)] }{n} \cdot W(v)\right)\nonumber\\
	&\quad\times \left(\P_\TT[|Y_n| \in n(a,b) \|
	\rtt\conn\TT_n]\right) \left( \frac{n\cdot\P_\TT[\rtt\conn
		\TT_n]}{W} \right)\,.
	\end{align*}
	
	Theorems \ref{thm:surv-prob} and \ref{thm:cond-surv} show that the
	latter two factors above have almost sure limits $\int_a^b \lambda
	e^{-\lambda x} \,dx$ and $\lambda$ as $n \to\infty$, leaving only
	the first term. We note that
	\begin{align*}
	\E\left[\sum_{v \in\TT_n} \frac{ \P_\TT[v \in Y_n \| |Y_n| \in
		n(a,b)] }{n} \cdot W(v) \,\Bigg|\, \T_n \right] &= \sum_{v \in\TT
		_n}\frac{ \P_\TT[v \in Y_n \| |Y_n| \in n(a,b)] }{n} \\
	&= \E_\TT\left[\frac{|Y_n|}{n} \, \bigg| \, |Y_n| \in n(a,b)
	\right] \\
	&= \frac{\E_\TT\left[\frac{|Y_n|}{n}\cdot\one_{|Y_n|/n \in
			(a,b)} \| \rtt\conn\TT_n \right]}{\P_\TT\left[\frac{|Y_n|}{n}
		\in(a,b) \| \rtt\conn\TT_n \right]}\\
	&\to\frac{\int_{a}^{b} \lambda x e^{-\lambda x} \,dx}{\int_{a}^{b}
		\lambda e^{-\lambda x}\,dx}
	\end{align*}
	where the limit is by the continuous mapping theorem \cite[Theorem
	3.2.4]{durrett4} and Theorem \ref{thm:cond-surv}. It's thus sufficient
	to show that
	\begin{equation}\label{eq:need} \left|\sum_{v \in\TT_n} \frac{ \P
		_\TT[v \in Y_n \| |Y_n| \in n(a,b)] }{n} \cdot(W(v) - 1) \right|
	\xrightarrow{n \to\infty} 0
	\end{equation}
	almost surely.
	
	Our strategy is to use a conditional version of the Borel-Cantelli
	Lemma together with Chebyshev's inequality. We bound the conditional
	variance
	\begin{align}
	\Var\Bigg[\sum_{v \in\TT_n}&\frac{ \P_\TT[v \in Y_n \| |Y_n|
		\in n(a,b)] }{n} \cdot(W(v) - 1) \, \bigg| \, \T_n \Bigg] \nonumber
	\\
	&= \Var(W)\sum_{v \in\TT_n} \frac{\P_\TT[v \in Y_n \| |Y_n| \in
		n(a,b)]^2 }{n^2} \nonumber\\
	&\leq\Var(W) \max_{v \in\TT_n} \P_\TT[v \in Y_n \| |Y_n| \in
	n(a,b)] \sum_{v \in\TT_n} \frac{\P_\TT[v \in Y_n \| |Y_n| \in
		n(a,b)] }{n^2} \nonumber\\
	&\leq\Var(W) \max_{v \in\TT_n} \P_\TT[v \in Y_n \| |Y_n| \in
	n(a,b)] \cdot\frac{\E[Y_n \| |Y_n| \in n(a,b)] }{n^2} \nonumber\\
	& \leq\Var(W) \cdot\frac{b}{n} \cdot\max_{v \in\TT_n} \P_\TT
	[v \in Y_n \| |Y_n| \in n(a,b)] \label{eq:var-bound} \,.
	\end{align}
	We want to show that this is summable, and thus look to bound the $\max
	$ term. Applying Lemma \ref{lem:condition} to the measure $\P_\TT
	[\cdot\| |Y_n| \in n(a,b)]$ gives
	\begin{align}
	&\left|\P_\TT[v \in Y_n \| |Y_n| \in n(a,b)] - \P_\TT[v \in Y_n \|
	|Y_n| \in n(a,b), B_m(n)] \right|\nonumber\\
	&\qquad\qquad\qquad\qquad\qquad\qquad\quad\leq\P_\TT[B_m(n)^c \| |Y_m| \in n(a,b)] \nonumber\\
	&\qquad\qquad\qquad\qquad\qquad\qquad\quad\leq\frac{\P_\TT[B_m(n)^c \| \rtt\conn\TT_n]}{\P_\TT[|Y_n|
		\in n(a,b) \| \rtt\conn\TT_n]} \nonumber \\
	&\qquad\qquad\qquad\qquad\qquad\qquad\quad= O(n^{-1/4}) \label{eq:point-difference}
	\end{align}
	by Proposition \ref{pr:spread} and Theorem \ref{thm:cond-surv}.
	Similarly,
	\begin{align}
	\P_\TT[v \in Y_n \| |Y_n| \in n(a,b), B_m(n)] &= \frac{\P_\TT[v
		\in Y_n, |Y_n| \in n(a,b), B_m(n)]}{\P_\TT[|Y_n| \in n(a,b), B_m(n)]}
	\nonumber\\
	&\leq\frac{ \P_\TT[v \in Y_n,B_m(n)]}{\P_\TT[|Y_n| \in n(a,b),
		B_m(n)]} \nonumber\\
	&= \frac{\P_\TT[v \in Y_n \| B_m(n)]}{\P_\TT[|Y_n| \in n(a,b) \|
		B_m(n)]} \label{eq:point-up}\,.
	\end{align}
	
	Using Lemma \ref{lem:condition} once again expands the denominator
	\[
	\Big| \P_\TT[|Y_n| \in n(a,b) \| B_m(n)] - \P_\TT[|Y_n| \in n(a,b)
	\| \rtt\conn\TT_n]\Big| \leq\P_\TT[B_m(n)^c \| \rtt\conn\TT
	_n] \leq C n^{-1/4}
	\]
	by Proposition \ref{pr:spread}. Plugging into \eqref{eq:point-up}
	gives the upper bound
	\begin{equation}
	\P_\TT[v \in Y_n \| |Y_n| \in n(a,b), B_m(n)] \leq\frac{\P_\TT[v
		\in Y_n \| B_m(n)]}{\P_\TT[|Y_n| \in n(a,b) \| \rtt\conn\TT_n] - C
		n^{-1/4}} \label{eq:point-final}\,.
	\end{equation}
	Combining \eqref{eq:point-difference}, \eqref{eq:point-final} and
	Proposition \ref{pr:spread} bounds
	\[
	\max_{v \in\TT_n} \P_\TT[v \in Y_n \| |Y_n| \in n(a,b)] =
	O(n^{-1/8})\,.
	\]
	Thus, by \eqref{eq:var-bound}, the conditional variance is almost
	surely summable. For any fixed $\delta> 0$, Chebyshev's inequality
	then implies
	\[
	\P\left[\left|\sum_{v \in\TT_n} \frac{\P_\TT[v \in Y_n \|
		|Y_n| \in(a,b)]}{n}\cdot(W(v) - 1) \right| > \delta\,\Bigg|\, \T
	_n\right]
	\]
	is summable almost surely. Applying a conditional Borel-Cantelli Lemma
	(e.g. \cite{chen-bc}) shows that \eqref{eq:need} holds almost surely.
\end{proof}

\section*{Acknowledgements}
The author would like to thank Josh Rosenberg for helpful
conversations.

\bibliographystyle{alpha}
\bibliography{Bib}

\end{document}